\newtheorem{theorem}{Theorem}[section]
\newtheorem{lemma}[theorem]{Lemma}
\newtheorem{prop}[theorem]{Proposition}
\newtheorem{cor}[theorem]{Corollary}
\newtheorem{utheorem}{\textrm{\textbf{Theorem}}}
\newcommand{\bd}[1]{d^{[#1]}}
\providecommand{\adj}{\mathop{\rm adj}}
\providecommand{\omg}{\mathop{\rm \Omega}}
\providecommand{\sgn}{\mathop{\rm sgn}}
\newcommand{\altr}[1]{\mathbb{R}^{#1}_{\rm alt}}
\newcommand{\lcp}[1]{\mathrm{LCP}(#1,q)}
\newcommand{\lcpn}[1]{\mathrm{LCP}(#1)}
\newcommand{\sol}[1]{\mathrm{SOL}(#1,q)}
\newcommand{\rn}[1]{\mathbb{R}^{#1}}
\title[Characterizing total positivity: single vector tests]
 {Characterizing total positivity: single vector tests via Linear Complementarity, sign non-reversal, and variation diminution} 
\author{Projesh Nath Choudhury}
\begin{document}
\maketitle

\tableofcontents

\begin{abstract}
A matrix $A$ is called totally positive (or totally non-negative) of order $k$,
denoted by $TP_k$ (or $TN_k$), if all minors of size at most $k$ are
positive (or  non-negative). These matrices have featured in diverse areas in mathematics, including algebra, analysis, combinatorics, differential equations and probability theory. The goal of this article is to provide a novel connection between total positivity and optimization/game theory. Specifically, we draw a relationship between totally positive matrices and the Linear Complementarity Problem (LCP), which generalizes and unifies linear and quadratic programming problems and bimatrix games -- this connection is unexplored, to the best of our knowledge. We show that
$A$ is $TP_k$ if and only if for every submatrix $A_r$ of $A$ formed from $r$ consecutive rows and $r$ consecutive columns (with $r\leq
k$), $\lcp{A_r}$ has a unique solution for each vector $q<0$. In fact this can be
strengthened to check the solution set of the LCP at a single vector for each such square submatrix. These novel characterizations are in the spirit of classical results characterizing $TP$ matrices by Gantmacher--Krein
[\textit{Compos.\ Math.} 1937] and $P$-matrices by Ingleton
[\textit{Proc. London Math. Soc.} 1966].

Our work contains two other contributions, both of which characterize
total positivity using single test vectors whose coordinates have alternating signs -- i.e., lie in a certain open bi-orthant. First, we improve on one of the main results in recent joint work [\textit{Bull.\ London Math.\ Soc.}, 2021], which provided a novel characterization of $TP_k$ matrices using sign non-reversal phenomena. We further improve on a classical characterization of total positivity  by Brown--Johnstone--MacGibbon [\textit{J. Amer. Statist. Assoc.} 1981] (following Gantmacher--Krein, 1950) involving the variation diminishing property. Finally, we use a P\'olya frequency function of Karlin [\textit{Trans.\ Amer.\ Math.\ Soc.} 1964] to show that our aforementioned characterizations of total positivity, involving (single) test-vectors drawn from the `alternating' bi-orthant, do not work if these vectors are drawn from any other open orthant.
\end{abstract}

\section{Introduction and main results}

Given an integer $k\geq 1$, we say a matrix is \textit{totally positive of order
	$k$ ($TP_k$)} if all its minors of order at most $k$ are positive. A matrix $A$ is
\textit{totally positive (TP)} if $A$ is $TP_k$ for all $k\geq 1$ i.e,  all minors of $A$ are positive. Similarly,
one defines \textit{totally non-negative ($TN$)} and $TN_k$ matrices for $k\geq 1$. These classes of matrices have important applications in various theoretical and applied
branches in mathematics. We mention a few of these topics and some of the experts who worked on them: analysis (Fekete and P\'olya \cite{FP12}, Schoenberg \cite{S30,S55}, Whitney \cite{Whitney}), representation theory (Lusztig \cite{Lu94}, Rietsch \cite{Ri03}), cluster
algebras (Berenstein, Fomin and Zelevinsky \cite{BFZ96,FZ02}), combinatorics (Brenti \cite{Bre95}),  matrix theory (Fallat and Johnson \cite{fallat-john}, Garloff \cite{GW96}, Pinkus \cite{pinkus}), differential equations (Karlin \cite{K68}, Loewner \cite{Lo55}), Gabor analysis (Gr\"{o}chenig, Romero and St\"{o}ckler \cite{GRS18}),
integrable systems (Kodama and Williams \cite{KW14}), probability and statistics (Karlin \cite{K68}), interacting particle systems (Gantmacher and Krein \cite{gantmacher-krein,GK50}), and interpolation theory and splines (de Boor \cite{deBoor}, Karlin and Ziegler \cite{Karlinsplines}, and Schoenberg with collaborators \cite{Curry,Schoenberg46,SW53}). We also mention the preprints \cite{BGKP20,BGKP21} for preserver problems involving totally positive matrices and P\'olya frequency functions/sequences. Given these numerous strong connections to many subfields of the broader mathematical sciences, it is perhaps surprising that a characterization of total positivity in terms of optimization/game theory or an application of total positivity in optimization/game theory remain  unexplored, to the best of our knowledge. The main objective of this article is to draw a connection between total positivity and the Linear Complementarity Problem (LCP) which generalizes and unifies linear and quadratic programming problems and bimatrix games. We believe this connection is novel.

Given a matrix $A \in \mathbb{R}^{n \times n}$ and a vector $q \in \mathbb{R}^n$, the {\it Linear Complementarity Problem} $\lcp{A}$ asks to find, if possible, a vector $x \in \mathbb{R}^n$ such that
\begin{equation}
x \geq 0,\quad y=Ax+q \geq 0, \quad and \quad x^Ty=0,\label{lcpdefn}
\end{equation}
where $x=(x_1,\ldots,x_n)^T\geq 0$ means that all $x_i\geq 0$. Any $x$ that satisfies the first two conditions is called a \textit{feasible vector}, and any feasible vector that satisfies the third condition is called a \textit{(complementarity) solution} of $\lcp{A}$. The set of all solutions of $\lcp{A}$ is denoted by $\sol{A}$.
The Linear Complementarity Problem has important applications in many different areas, including bimatrix games, convex quadratic programming, fluid mechanics, solution of systems of piecewise linear
equations, and variational inequality problems \cite{CD68,CPS09,Cr71,ES76,L65}. The study of LCPs has resulted in progress along several fronts. For example, the complementary pivot algorithm, which was first developed for solving LCPs, has been generalized in a straightforward way to obtain efficient algorithms for computing Brouwer and Kakutani fixed points, for solving systems of nonlinear equations and nonlinear programming problems, and for computing economic equilibria. Also, iterative methods developed for solving LCPs are very useful for tackling very large scale linear programs, which cannot be handled with the simplex method because of their large size and numerical difficulties. As far as the bimatrix game is concerned, the LCP formulation was instrumental in the discovery of an efficient constructive method for the computation of a Nash equilibrium point. For more details about Linear Complementarity Problems and their applications, we refer to \cite{Co68,CPS09,I70}.

In this section, we state our first two main results, which provide characterizations of total positivity in terms of the Linear Complementarity Problem, and thereby connect these two well-studied areas. To state these results, we begin with some preliminary definitions, which we use in this paper without further reference.

\begin{defn}
	Let $n \geq 1$ be an integer, $A \in \mathbb{R}^{n \times n}$ be a matrix, and $S \subset \mathbb{R}^n$ be a subset.
	\begin{enumerate}
		\item Define the set $[n]:=\{1,2,\ldots,n\}$.
		\item Given $i,j \in [n]$, let $A^{ij}$ denote the determinant of the $(n-1)\times (n-1)$ submatrix of $A$ obtained by deleting the $ith$ row and $jth$ column of $A$. If $n=1$, then define $A^{11}:=1$ to be the determinant of the empty matrix.
		\item Let $\adj(A)$ denote the adjugate matrix of $A$ and $\{e^i\}$ denote the standard orthonormal basis of $\mathbb{R}^n$.
		\item Let $\altr{n} \subset \mathbb{R}^n$ denote the set of real vectors with all nonzero coordinates and alternating signs.
		\item A submatrix $B$ of $A$ is called a \textit{contiguous submatrix}, if the rows and columns of $B$ are indexed by sets of consecutive integers.
		\item The matrix $A$ has the \textit{sign
			non-reversal property} with respect to $S$, if for all vectors $0 \neq x
		\in S$, there exists a coordinate $i \in [n]$ such that $x_i (Ax)_i
		> 0$.
		
		\item We also require a non-strict version. The matrix $A$ has the \textit{non-strict sign non-reversal
			property} with respect to $S$ if for all vectors $0 \neq x \in S$, there exists a coordinate $i \in [n]$ such that $x_i \neq 0$ and $x_i
		(Ax)_i \geq 0$.
		\item We say that a vector $x \in \mathbb{R}^n$ is $\geq 0$ (respectively $x>0,~ x\leq 0,~ x<0$) if every coordinate of $x$ is $\geq 0$ (respectively $>0,~\leq 0,~<0$).
	\end{enumerate}
\end{defn}
We can now state our first main result. 
\begin{utheorem}\label{tp-lcp}
	Let $m,n \geq k \geq 1$ be integers.
	Given $A \in \mathbb{R}^{m \times n}$, the following statements
	are equivalent.
	\begin{enumerate}
		\item The matrix $A$ is totally positive of order $k$.
		\item For every square submatrix $A_r$ of $A$ of size $r\in [k]$, $\lcp{A_r}$ has a unique solution for all $q \in \rn{r}$.
		\item For every contiguous square submatrix $A_r$ of $A$ of size $r\in [k]$, $\lcp{A_r}$ has a unique solution for all $q \in \rn{r}$ with $q<0$.
		\item For every contiguous square submatrix $A_r$ of $A$ of size $2\leq r \leq k$ and for all $q \in \rn{r}$ with $q<0$, $\sol{A_r}$ does not simultaneously contain two vectors with sign pattern $\begin{pmatrix}
		+ \\ 0 \\+\\0\\\vdots
		\end{pmatrix}$ and $\begin{pmatrix}
		0 \\ + \\0\\+\\\vdots
		\end{pmatrix}$ . Moreover for $r=1$ and all $i\in [m],~j \in[n]$, $\lcp{(a_{ij})_{1\times 1}}$ has a solution for some scalar $q<0$. \smallskip
		
	\end{enumerate}
\end{utheorem}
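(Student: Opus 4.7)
The plan is to prove the cycle $(1)\Rightarrow(2)\Rightarrow(3)\Rightarrow(4)\Rightarrow(1)$. The first three implications are short. For $(1)\Rightarrow(2)$: any $r\times r$ submatrix of a $TP_k$ matrix with $r\leq k$ is itself $TP_r$, so in particular it is a $P$-matrix, and by the classical Samelson--Thrall--Wesler theorem, $\lcp{M}$ admits a unique solution for every $q$ whenever $M$ is a $P$-matrix. The step $(2)\Rightarrow(3)$ is a direct restriction. For $(3)\Rightarrow(4)$: when $r\geq 2$, uniqueness directly precludes the existence of two distinct solutions with the prescribed sign patterns; when $r=1$, having a unique solution for every $q<0$ in particular gives existence of a solution for some $q<0$.

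The heart of the theorem is $(4)\Rightarrow(1)$. I will induct on $r\in\{1,\ldots,k\}$, using Fekete's theorem to reduce ``$A$ is $TP_r$'' to the positivity of every contiguous $r\times r$ minor of $A$. The base case $r=1$ follows from the existence clause in (4), because $\lcp{(a)}$ has a solution for some $q<0$ exactly when $a>0$. For the inductive step, assume $A$ is $TP_{r-1}$, fix any contiguous $r\times r$ submatrix $A_r$, and argue by contradiction: assuming $\det(A_r)\leq 0$, I will produce $q<0$ together with two solutions of $\lcp{A_r}$ realizing the two alternating sign patterns in (4).

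Partition $[r]=O\sqcup E$ into odd and even indices and write $A_r$ in the corresponding block form with blocks $A_{OO}, A_{OE}, A_{EO}, A_{EE}$. A direct unpacking of the LCP conditions shows that a vector $x^{(1)}$ supported on $O$ with values $u>0$ and a vector $x^{(2)}$ supported on $E$ with values $v>0$ are both solutions of $\lcp{A_r}$ for a common $q<0$ if and only if
\[
A_{OE}\,v \geq A_{OO}\,u \qquad\text{and}\qquad A_{EO}\,u \geq A_{EE}\,v,
\]
in which case one takes $q_O=-A_{OO}u$ and $q_E=-A_{EE}v$ (both strictly negative since $A_r$ has positive entries). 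To exhibit such $u,v$ I set $c := \adj(A_r)\,e^2 \in \mathbb{R}^r$. Because every $(r-1)\times(r-1)$ minor of $A_r$ is positive by the inductive $TP_{r-1}$ hypothesis, the entries $c_i=(-1)^{i+2}A_r^{2i}$ have strictly alternating signs starting with $c_1<0$, so $c\in\altr{r}$. Taking $u:=-c_O$ and $v:=c_E$ (both strictly positive), the identity $A_r\cdot\adj(A_r)=\det(A_r)\,I$ yields $A_rc=\det(A_r)\,e^2$; reading off the $O$- and $E$-block components gives $A_{OE}v-A_{OO}u=0$ and $A_{EO}u-A_{EE}v=-\det(A_r)\,f$, where $f\in\mathbb{R}^{|E|}$ is the standard basis vector corresponding to the index $2\in E$. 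The assumption $\det(A_r)\leq 0$ turns both into the required inequalities, completing the contradiction.

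\textbf{Main obstacle.} The delicate point is that the minors $A_r^{2i}$ appearing in the adjugate construction are not in general contiguous minors of $A_r$, so I must invoke the inductive hypothesis in its full form ($A$ is $TP_{r-1}$, i.e., all minors of order at most $r-1$ are positive) rather than only the contiguous-minor form supplied directly by Fekete -- though these are of course equivalent statements. A secondary subtlety is the choice of adjugate column: selecting the \emph{second} column $\adj(A_r)\,e^2$ is convenient because the unique nonzero coordinate of $\det(A_r)\,e^2$ always lies in the $E$-block, handling the two parities of $r$ uniformly.
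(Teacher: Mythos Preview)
Your proof is correct, but your route for $(4)\Rightarrow(1)$ differs from the paper's. The paper does not induct on $r$ or construct explicit adjugate vectors; instead it invokes the sign non-reversal characterization of $TP_k$ from \cite{CKK21} (Theorem~\ref{tp-sign-rev_k}) as a black box: assuming some contiguous $A_r$ fails sign non-reversal on a vector $x\in\altr{r}$, it splits $x=x^{+}-x^{-}$ and $A_rx=v^{+}-v^{-}$ into positive and negative parts, sets $q:=v^{+}-A_rx^{+}=v^{-}-A_rx^{-}$, and checks that $x^{+},x^{-}$ are two solutions of $\lcp{A_r}$ with the forbidden sign patterns (with $q<0$ because all entries of $A_r$ are positive). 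Your argument is more self-contained: the Fekete induction plus the adjugate column $\adj(A_r)e^{2}$ avoids any appeal to \cite{CKK21}, and in fact your construction is closer in spirit to the paper's proof of Theorem~\ref{tp-lcp_1}, where adjugate columns are used to manufacture LCP solutions. The trade-off is that the paper's argument, once the sign non-reversal theorem is granted, handles all $r\ge 2$ in one stroke without an inductive hypothesis, whereas your approach needs $TP_{r-1}$ (hence the full, not merely contiguous, positivity of the $(r-1)$-minors $A_r^{2i}$) to guarantee $c\in\altr{r}$---a point you correctly flag and resolve via Fekete.
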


In fact, we improve this result by characterizing total positivity in terms of the number of solutions of the LCP at a single vector, for each contiguous submatrix.

\begin{utheorem}\label{tp-lcp_1}
	Let $m,n \geq k \geq 1$ be integers.
	Given $A \in \mathbb{R}^{m \times n}$, the following statements
	are equivalent.
	\begin{enumerate}
		\item The matrix $A$ is totally positive of order $k$.
		\item For every $r \in [k]$ and contiguous $r
		\times r$ submatrix $A_r$ of $A$, define the vectors
		\begin{equation}\label{tlcp}
		x^{A_r} := (A^{11}_r,0,A^{13}_r,0,\ldots)^T, \qquad q^{A_r}: =-A_r x^{A_r}.
		\end{equation}
		Then $x^{A_r}$ is the only solution of $\lcpn{A_r,q^{A_r}}$.
		
	\end{enumerate}
\end{utheorem}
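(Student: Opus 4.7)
The plan is to prove the equivalence by handling each direction separately. The implication $(1)\Rightarrow(2)$ is short and classical, while $(2)\Rightarrow(1)$ proceeds by induction on $r\in[k]$ using a single auxiliary complementary-support test vector and the cofactor identity to convert the uniqueness hypothesis into a strict determinantal inequality.

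For $(1)\Rightarrow(2)$: if $A$ is $TP_k$, then for every contiguous $r\times r$ submatrix $A_r$ with $r\le k$, each principal minor of $A_r$ is itself a minor of $A$ of size at most $k$, so $A_r$ is a $P$-matrix. By the classical Samelson--Thrall--Wesler/Ingleton theorem, $\lcpn{A_r,q}$ has a unique solution for every $q\in\rn{r}$. The vector $x^{A_r}$ is nonnegative (its nonzero coordinates are $(r-1)\times(r-1)$ minors of $A$, hence positive) and satisfies $A_r x^{A_r}+q^{A_r}=0$, so it is itself a solution, and thus the only one; alternatively one may invoke Theorem~A directly.

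For $(2)\Rightarrow(1)$, the plan is to prove by induction on $r=1,\ldots,k$ that every contiguous $r\times r$ minor of $A$ is positive; Fekete's criterion then yields $A\in TP_k$. Fix such an $A_r$; by the inductive hypothesis (vacuous at $r=1$ under the convention $A_1^{11}:=1$) we have $A_r^{1j}>0$ for all $j\in[r]$. Introduce the complementary-support vector
\[
x':=(0,A_r^{12},0,A_r^{14},\ldots)^T\in\rn{r}
\]
and the vector $v\in\rn{r}$ with entries $v_j:=(-1)^{j+1}A_r^{1j}$, which is precisely the first column of $\adj(A_r)$. The cofactor identity $A_r\adj(A_r)=\det(A_r)\,I$ gives $A_r v=\det(A_r)\,e^1$, and inspection shows $x^{A_r}-x'=v$, so
\[
y':=A_r x'+q^{A_r}=A_r(x'-x^{A_r})=-\det(A_r)\,e^1.
\]
If $\det(A_r)\le 0$, then $x'\ge 0$, $y'\ge 0$, and $(x')^T y'=0$ (since $x'_1=0$), so $x'$ is a solution of $\lcpn{A_r,q^{A_r}}$ distinct from $x^{A_r}$ (indeed $x'_1=0<A_r^{11}=x^{A_r}_1$). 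This contradicts the uniqueness hypothesis, and hence $\det(A_r)>0$, completing the induction.

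The main conceptual ingredient is the identification of $v$ as the natural shift carrying $x^{A_r}$ to $x'$: $v$ is the alternating cofactor vector arising in the Laplace expansion of $\det(A_r)$ along its first row, and this is precisely the shift that lets a single LCP test vector detect $\sgn(\det A_r)$. Apart from this observation, the argument reduces to routine inductive bookkeeping combined with Fekete's reduction to contiguous minors, so I do not anticipate a substantial obstacle.
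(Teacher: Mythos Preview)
Your argument is essentially the paper's own proof: the auxiliary vector $x'$ is exactly the paper's $z^{A_r}$, the cofactor identity $A_r v=\det(A_r)e^1$ is used identically, and the contradiction is the same (the paper splits $\det A_r=0$ and $\det A_r<0$ into two cases, but your combined treatment $\det A_r\le 0$ is slightly cleaner).

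One small point to tighten: in the induction step you assert ``by the inductive hypothesis \ldots\ $A_r^{1j}>0$ for all $j\in[r]$,'' but your inductive hypothesis is that all \emph{contiguous} minors of $A$ of size $<r$ are positive, and $A_r^{1j}$ is not a contiguous minor of $A$ when $1<j<r$ (deleting an interior column breaks contiguity). The fix is the one the paper makes explicit: the inductive hypothesis plus Fekete--Schoenberg gives that $A_r$ is $TP_{r-1}$, hence \emph{all} proper minors of $A_r$ are positive, in particular each $A_r^{1j}$. With that one extra sentence your proof is complete and coincides with the paper's.
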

\noindent Here, the determinant of the empty matrix is defined to be one.

An immediate application of Theorem~\ref{tp-lcp}
is a novel characterization of \textit{P\'olya frequency sequences of
	order $k$} via linear complementarity. These are real sequences $(c_n)_{n \in \mathbb{Z}}$
such that for all integers
\[
1 \leq r \leq k, \qquad m_1 < \cdots < m_r, \qquad n_1 < \cdots < n_r,
\]
the determinant $\det (c_{m_i - n_j})_{i,j=1}^r \geq 0$. If all such
determinants are positive, we say the sequence is a
\textit{$TP_k$ P\'olya frequency sequence}.

\begin{cor}
	Let $k \geq 1$ be an integer. A real sequence $(c_n)_{n \in \mathbb{Z}}$
	is a $TP_k$ P\'olya frequency sequence, if and only if for all integers
	$r \in [k]$ and $l \in \mathbb{Z}$, and all vectors $q \in \mathbb{R}^{r}$ with $q<0$, there
	exists a unique $x \in {[0,\infty)}^r$ such that
	\begin{equation}
	y_i:=\sum_{j=1}^r c_{l + i -
		j} x_j +q_i\geq 0,\quad x_i y_i=0,\quad \forall i \in [r].
	\end{equation}
\end{cor}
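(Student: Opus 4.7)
The plan is to reduce the corollary directly to Theorem~\ref{tp-lcp} applied to finite sections of the bi-infinite Toeplitz matrix generated by $(c_n)_{n \in \mathbb{Z}}$. First I would set up the dictionary between the sequence and the matrix side. For each $N \geq 1$ and $s \in \mathbb{Z}$, define $A^{(s,N)} := (c_{s+i-j})_{i,j=1}^N \in \mathbb{R}^{N\times N}$. An $r \times r$ minor of $A^{(s,N)}$ with strictly increasing row indices $i_1 < \cdots < i_r$ and column indices $j_1 < \cdots < j_r$ equals $\det(c_{(s+i_p) - j_q})_{p,q=1}^r$; setting $m_p := s + i_p$ and $n_q := j_q$ yields precisely a determinant from the definition of a $TP_k$ P\'olya frequency sequence, and conversely every such determinant is a minor of some $A^{(s,N)}$. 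Consequently, $(c_n)$ is a $TP_k$ P\'olya frequency sequence if and only if $A^{(s,N)}$ is $TP_k$ for every $s \in \mathbb{Z}$ and every $N \geq 1$.

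Second, I would translate the LCP hypothesis. The Toeplitz structure of $A^{(s,N)}$ means that its contiguous $r \times r$ submatrix with consecutive rows $\{a+1,\dots,a+r\}$ and columns $\{b+1,\dots,b+r\}$ is exactly $M^{(l,r)} := (c_{l+i-j})_{i,j=1}^r$ with $l = s + a - b$; conversely, each $M^{(l,r)}$ coincides with $A^{(l,r)}$ itself, so the family of all matrices $M^{(l,r)}$ with $r \in [k]$ and $l \in \mathbb{Z}$ is exactly the family of all contiguous $r \times r$ submatrices (with $r \in [k]$) of the various $A^{(s,N)}$. Therefore the LCP hypothesis of the corollary---uniqueness of the nonnegative solution of $\lcp{M^{(l,r)}}$ for every $l \in \mathbb{Z}$, $r \in [k]$, and $q < 0$---is exactly the requirement that every contiguous $r \times r$ submatrix ($r \in [k]$) of every $A^{(s,N)}$ has a unique LCP solution for every $q < 0$.

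With both dictionaries in place, the corollary is immediate from the equivalence $(1) \Leftrightarrow (3)$ of Theorem~\ref{tp-lcp} applied to each $A^{(s,N)}$. In the forward direction, the $TP_k$ P\'olya frequency property of $(c_n)$ makes each $A^{(s,N)}$ a $TP_k$ matrix, and condition~(3) then supplies the desired LCP uniqueness. In the converse direction, the LCP hypothesis supplies condition~(3) for every $A^{(s,N)}$, whence (1) forces each $A^{(s,N)}$ to be $TP_k$, which is the $TP_k$ P\'olya frequency property of $(c_n)$. There is no substantive obstacle beyond the index bookkeeping matching the shift parameter $l \in \mathbb{Z}$ with the row and column offsets of contiguous Toeplitz sections; the content is carried entirely by Theorem~\ref{tp-lcp}.
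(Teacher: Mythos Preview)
Your proposal is correct and follows essentially the same approach as the paper: both reduce the corollary to the equivalence $(1)\Leftrightarrow(3)$ of Theorem~\ref{tp-lcp}, exploiting the fact that the contiguous square submatrices of the Toeplitz sections $(c_{s+i-j})$ are precisely the matrices $(c_{l+i-j})_{i,j=1}^r$ appearing in the LCP hypothesis. The only difference is cosmetic: the paper applies Theorem~\ref{tp-lcp} directly to each $r\times r$ Toeplitz block $(c_{l+i-j})_{i,j=1}^r$ without introducing the auxiliary $N\times N$ matrices $A^{(s,N)}$, whereas you route the argument through these larger sections---but the content is identical and the extra bookkeeping you supply (matching minors with determinants $\det(c_{m_p-n_q})$ and matching contiguous submatrices with shifts $l$) is exactly what the paper leaves implicit.
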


This can be shown by applying Theorem~\ref{tp-lcp} to the
square submatrices
\[
\begin{pmatrix}
c_l & c_{l-1} & \cdots & c_{l-r+1}\\
c_{l+1} & c_l & \cdots & c_{l-r+2}\\
\vdots & \vdots & \ddots & \vdots\\
c_{l+r-1} & c_{l+r-2} & \cdots & c_l
\end{pmatrix}, \qquad l \in \mathbb{Z}.
\]

We now explain the organization of the paper, including two additional results below. In the next section we prove Theorems \ref{tp-lcp} and \ref{tp-lcp_1}. In addition to classical results by Fekete, Ingleton, Samelson--Thrall--Wesler, and Schoenberg, the key new ingredient is a novel characterization of total positivity (in fact of $TP_k$), which uses the sign non-reversal property and which was not known until our recent joint work \cite{CKK21}.

In Section \ref{snrsec}, we return to our recent joint work \cite{CKK21}, in which we showed that $TN_k$ matrices are characterized by the sign non-reversal property at a single vector (for each square submatrix). A similar result for $TP_k$ remained elusive, though we had shown a characterization involving the sign non-reversal property at a single vector, but with an (uncountable) additional set of constraints. Our next main result, Theorem \ref{TPsnr}, addresses this gap and provides truly a single test vector for each square submatrix of a $TP_k$ matrix without additional conditions.

In Section \ref{vdsec} we return to an even earlier, fundamental result of Gantmacher and Krein in 1950 \cite{GK50} (see also its stronger version in \cite{BJM81}). This is a well-known characterization of total positivity, in terms of the variation diminishing property on the test set of all real vectors,
which has numerous applications (see \cite{K68,MS19}). Our final main result, Theorem \ref{TP-vandimnew}, improves on this by reducing the test set to a single vector for each square submatrix.

In the final section, we take a second look at our results in the previous two sections \ref{snrsec} and \ref{vdsec}. We showed in these two sections (and previous work) that the variation diminishing property and the sign non-reversal property, each at a single test vector (for each square submatrix of $A$) suffices to prove the total positivity of the matrix $A$. The proofs reveal that these test vectors necessarily have coordinates with alternating signs. We now show that such `single test vectors' \textit{must} have alternating-signed coordinates. Namely, any $TP_{n-1}$ or $TN_{n-1}$ matrix in $\mathbb{R}^{n \times n}$, even one with a negative determinant, satisfies the variation diminishing property and the sign non-reversal property on every vector in every other (open) orthant in $\mathbb{R}^n$. We also provide a similar observation about the LCP.

We conclude this section with some general remarks. In 1937,
Gantmacher--Krein \cite{gantmacher-krein} gave a fundamental
characterization of totally positive matrices of order $k$ by the
positivity of the spectra of all submatrices of size at most $k$. There
is a well known article \cite{FZ00} by Fomin--Zelevinsky about tests for
totally positive matrices; there have been numerous subsequent papers along this theme, e.g. \cite{Chepuri}. The present paper may be regarded as being similar in spirit.

\section{Theorems \ref{tp-lcp} and \ref{tp-lcp_1}: Total positivity and the Linear Complementarity Problem}\label{tpsec}
In this section we prove Theorems \ref{tp-lcp} and \ref{tp-lcp_1}. To proceed, we require two preliminary results. The first result establishes a connection between the LCP and matrices with positive principal minors (these are known as $P$-matrices):
\begin{theorem}[(Ingleton \cite{I66}, Samelson--Thrall--Wesler~\cite{STW58})]\label{lcpp}
	A matrix $A \in \mathbb{R}^{n \times n}$ has all principal minors
	positive if and only if 
	$\lcp{A}$ has a unique solution for all $q \in \mathbb{R}^{n}$.
\end{theorem}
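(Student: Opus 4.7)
The plan is to prove both directions via the sign non-reversal characterization of P-matrices (due to Fiedler--Pt\'ak): $A \in \mathbb{R}^{n\times n}$ has all principal minors positive if and only if for every nonzero $y \in \mathbb{R}^n$ some coordinate $i$ satisfies $y_i(Ay)_i > 0$. Taking this algebraic equivalence as a lemma, the LCP conditions $x\geq 0$, $Ax+q\geq 0$, $x^T(Ax+q)=0$ translate cleanly into sign-reversal statements on differences of putative solutions.

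For the uniqueness half of the forward direction, I would suppose $x\neq z$ both solve $\mathrm{LCP}(A,q)$ and set $y:=x-z$. Writing $w:=Ax+q$ and $w':=Az+q$, I check index by index that $y_i(Ay)_i\leq 0$: if $y_i>0$ then $x_i>0$ forces $w_i=0$, so $(Ay)_i=-w'_i\leq 0$; the case $y_i<0$ is symmetric; and $y_i=0$ is vacuous. This contradicts the sign non-reversal property, so $x=z$. For existence, I would invoke either Lemke's complementary pivoting algorithm (which is known to terminate on every P-matrix and output a solution), or equivalently the classical geometric fact that for a P-matrix the $2^n$ complementary cones tile $\mathbb{R}^n$, so every $q$ lies in at least one of them and yields a complementarity solution.

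For the converse I would argue by contrapositive: if $A$ is not a P-matrix, construct a $q$ with two distinct LCP solutions. Pick $y\neq 0$ with $y_i(Ay)_i\leq 0$ for all $i$, decompose $y = y^+ - y^-$ into disjointly supported nonnegative parts, and define $q_i:=-(Ay^+)_i$ on $\mathrm{supp}(y^+)$, $q_i:=-(Ay^-)_i$ on $\mathrm{supp}(y^-)$, and $q_i:=\max\{-(Ay^+)_i,-(Ay^-)_i\}$ on the remaining coordinates. The hypothesis $y_i(Ay)_i\leq 0$ is precisely what is needed to verify the nonnegativity of both slacks $Ay^+ + q$ and $Ay^- + q$; complementarity is automatic from the definition of $q$. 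Hence $y^+$ and $y^-$ are two distinct solutions of $\mathrm{LCP}(A,q)$, contradicting uniqueness.

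The main obstacle is the existence half of the forward direction: the genuinely combinatorial/topological content that the complementary cones cover $\mathbb{R}^n$, or equivalently that Lemke's pivoting scheme must terminate on every P-matrix. This is the substantive part of the theorem; the sign-reversal arguments for uniqueness and for the converse are elementary by comparison. Since Theorem \ref{lcpp} is cited rather than reproved in the paper, in practice I would simply appeal to Samelson--Thrall--Wesler and Ingleton at this step, retaining the sign-reversal reductions above as the conceptual skeleton.
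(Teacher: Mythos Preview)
The paper does not prove Theorem~\ref{lcpp}; it is quoted as a classical result and used as a black box in the proof of Theorem~\ref{tp-lcp}. So there is no ``paper's own proof'' to compare against.

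Your proposal is correct, and is essentially the standard argument. The uniqueness part of the forward direction and the contrapositive construction for the converse are complete and clean; your verification that both $y^+$ and $y^-$ satisfy feasibility and complementarity for the constructed $q$ goes through exactly as you describe. You are also right that the genuine content lies in the existence half (the Samelson--Thrall--Wesler covering of $\mathbb{R}^n$ by complementary cones, or equivalently termination of Lemke's algorithm on $P$-matrices), and that this is what the cited references supply; you correctly flag this as the step where you would defer to the literature, which is precisely what the paper does.

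It is worth noting that your contrapositive construction---taking a nonzero $y$ on which $A$ reverses signs, splitting it as $y^+ - y^-$, and exhibiting both halves as LCP solutions for a suitable $q$---is the same mechanism the paper exploits in the proof of $(iv)\Rightarrow(i)$ in Theorem~\ref{tp-lcp}. There the vector $y$ comes from $\altr{r}$ and the resulting $q$ is automatically negative because $A_r>0$, but the skeleton is identical. So your approach to the cited lemma is in the same spirit as the paper's own arguments elsewhere.
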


The proof of Theorem \ref{tp-lcp} also uses the following result, proved in recent joint work, which characterizes total positivity in terms of the sign non-reversal phenomenon.

\begin{theorem}\cite{CKK21}\label{tp-sign-rev_k}
	Let $m,n \geq k \geq 1$ be integers.
	Given $A \in \mathbb{R}^{m \times n}$, the following statements
	are equivalent.
	\begin{enumerate}
		\item The matrix $A$ is totally positive of order $k$.
		\item Every square submatrix of $A$ of size $r \in [k]$ has the
		sign non-reversal property with respect to $\mathbb{R}^r$.
		\item Every contiguous square submatrix of $A$ of size $ r \in [k]$ has the sign non-reversal property with respect to $\altr{r}$.
	\end{enumerate}
\end{theorem}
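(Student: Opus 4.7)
The plan is to establish the cycle (1) $\Rightarrow$ (2) $\Rightarrow$ (3) $\Rightarrow$ (1). The implication (1) $\Rightarrow$ (2) follows by observing that each $r \times r$ submatrix of a $TP_k$ matrix with $r \leq k$ is itself totally positive of order $r$, hence has every principal minor positive, and so is a $P$-matrix; the classical Fiedler--Pt\'ak characterization of $P$-matrices via sign non-reversal then supplies sign non-reversal on all of $\mathbb{R}^r$. The step (2) $\Rightarrow$ (3) is immediate, since contiguous square submatrices form a subclass of all square submatrices and $\altr{r} \subset \mathbb{R}^r$.

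The substantive direction is (3) $\Rightarrow$ (1). I would prove by induction on $r \in [k]$ the stronger statement that $A$ is $TP_r$. The base case $r = 1$ is immediate: each entry $a_{ij}$ is a $1 \times 1$ contiguous submatrix, and feeding the nonzero scalar $x = 1 \in \altr{1}$ into the sign non-reversal property forces $a_{ij} > 0$.

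For the inductive step, assume $A$ is $TP_{r-1}$ and fix any contiguous $r \times r$ submatrix $B$ of $A$. By the classical Fekete / Gantmacher--Krein principle, positivity of every contiguous $r \times r$ minor of $A$ together with $A$ being $TP_{r-1}$ upgrades to $A$ being $TP_r$, so it suffices to show $\det B > 0$. Since $B$ inherits $TP_{r-1}$ from $A$, every $(r-1) \times (r-1)$ minor of $B$, contiguous or not, is strictly positive. Define the adjugate-style test vector
\begin{equation*}
v_j := (-1)^{j+1} B^{1j}, \qquad j \in [r],
\end{equation*}
so that $v \in \altr{r}$. A Laplace expansion along the first row yields $Bv = (\det B)\, e^1$. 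If $\det B \leq 0$, then $v_1 (Bv)_1 = v_1 \det B \leq 0$ while $v_i (Bv)_i = 0$ for $i > 1$, violating the sign non-reversal property of $B$ on $\altr{r}$. Hence $\det B > 0$, completing the induction.

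The main obstacle I anticipate is guaranteeing that $v$ genuinely lies in the alternating bi-orthant: the minors $B^{1j}$ with $1 < j < r$ are \emph{not} contiguous submatrices of $A$, so hypothesis (3) provides no direct control over their signs. This is precisely why the induction must propagate the full $TP_{r-1}$ property of $A$ at every step rather than only positivity of contiguous minors of order $r-1$; the Fekete-type upgrade from contiguous to general minors is the technical fulcrum that makes the alternating-signed test vector available.
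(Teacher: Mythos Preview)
Your proof is correct. The paper itself does not supply a proof of Theorem~\ref{tp-sign-rev_k}---it is quoted from \cite{CKK21} as an input---so there is no proof in this paper to compare against directly. That said, your argument for (3)~$\Rightarrow$~(1) is precisely the argument the paper gives for the harder direction (ii)~$\Rightarrow$~(i) of the \emph{sharper} Theorem~\ref{TPsnr}: induct on $r$, use Fekete--Schoenberg (Theorem~\ref{fec}) both to reduce to contiguous $r\times r$ minors and to guarantee that every cofactor $B^{1j}$ is strictly positive (so that the test vector lands in $\altr{r}$), observe $Bv=(\det B)\,e^1$, and read off $\det B>0$ from sign non-reversal at $v$. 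Since you invoke hypothesis~(3) only at this single vector, you have in effect re-proved Theorem~\ref{TPsnr} en route. Your (1)~$\Rightarrow$~(2) via the $P$-matrix sign non-reversal characterization is the standard route; the paper cites the same fact as Gale--Nikaid\^o \cite{gale-nikai-pmat}.
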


We will also revisit and strengthen this result in Theorem \ref{TPsnr} below.
\begin{proof}[of Theorem \ref{tp-lcp}]
	That $(i) \implies (ii)$ follows from Theorem \ref{lcpp}, while $(ii)
	\implies (iii) \implies (iv)$ is immediate. To show $(iv) \implies (i)$, we first claim that all entries of $A$ are positive. Indeed, if $a_{ij}\leq 0$ for some $i\in[m],~j \in [n]$, then $\lcp{(a_{ij})_{1\times 1}}$ does not have a solution for any scalar $q<0$.
	
	Next, we show that all the minors of $A$ of size $2\leq r \leq k$ are positive. By Theorem \ref{tp-sign-rev_k}, it suffices to show every contiguous square submatrix of $A$ of size $ r \in [k]\setminus\{1\}$ has the sign non-reversal property with respect to $\altr{r}$. Let $r \in [k]\setminus\{1\}$ and suppose for contradiction that $A_{r}$ is an $r \times r$ contiguous submatrix of $A$ such that $A_{r}$  does not satisfy the sign non-reversal property with respect to $\altr{r}$. Then there exists $x \in \altr{r}$ such that $x_i(A_r x)_i\leq 0$ for all $i \in [r]$. Let $x^+:=\frac{1}{2}(\vert x \vert +x)$ and $x^-:=\frac{1}{2}(\vert x \vert -x)$, where we define $\vert (x_1,\ldots,x_n)^T\vert:= (\vert x_1 \vert, \ldots, \vert x_n \vert)^T $. Then $x^+$ has sign pattern $\begin{pmatrix}
	+ \\ 0 \\+\\0\\\vdots
	\end{pmatrix}$ and  $x^-$ has sign pattern $\begin{pmatrix}
	0 \\ + \\0\\+\\\vdots
	\end{pmatrix}$ (or vice-versa). Let $v=A_rx$ and set $v^{\pm}:=\frac{1}{2}(\vert v\vert \pm v)$. Note that $x=x^+-x^-$ and $A_rx=v^+-v^-$. Define
	\begin{equation}
	q:=v^+-A_rx^+=v^--A_rx^-.\label{thrmaeq1}
	\end{equation}
	Since $x^+, x^-\geq 0$ and $A_r>0$, we have $q<0$. Also, ${(x^+)}^T v^+ =0$ and ${(x^-)}^T v^- =0$, since $x_i v_i\leq 0$ for all $i \in [r]$. Thus $\lcp{A_r}$ has solutions having sign patterns $\begin{pmatrix}
	+ \\ 0 \\+\\0\\\vdots
	\end{pmatrix}$ and $\begin{pmatrix}
	0 \\ + \\0\\+\\\vdots
	\end{pmatrix}$. This yields the desired contradiction.
\end{proof}
To prove Theorem \ref{tp-lcp_1}, we require the well-known 1912 result of Fekete for $TP$ matrices, which was extended in 1955 by Schoenberg to $TP_k$
matrices.
\begin{theorem}[(Fekete~\cite{FP12}, Schoenberg~\cite{S55})]\label{fec}
	Let $m,n \geq k \geq 1$ be integers. Then $A \in \mathbb{R}^{m \times n}$
	is $TP_k$ if and only if every contiguous square submatrix of $A$ of size $r \in [k]$ has positive determinant.
\end{theorem}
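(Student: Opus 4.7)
The forward direction is immediate from the definition of $TP_k$. For the converse, I would induct on the minor size $r$. The base $r = 1$ is trivial: contiguous $1 \times 1$ submatrices are the entries $a_{ij}$, positive by hypothesis.

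For the inductive step $r \geq 2$, the outer induction guarantees that all minors of size at most $r - 1$ are positive, and we must show every $r \times r$ minor $\det A[I;J]$ (with $I = \{i_1 < \cdots < i_r\}$, $J = \{j_1 < \cdots < j_r\}$) is positive. Introduce the nonnegative integer gap measure
\begin{equation*}
g(I, J) := (i_r - i_1 - r + 1) + (j_r - j_1 - r + 1),
\end{equation*}
which vanishes iff $I$ and $J$ are both consecutive. Perform a secondary induction on $g$, with base case $g = 0$ covered by the contiguous hypothesis. For $g \geq 1$, without loss of generality $J$ is non-consecutive (else pass to $A^T$); pick $s$ with $j_{s+1} \geq j_s + 2$ and set $j^* := j_s + 1$. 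Form the $r \times (r+1)$ augmented submatrix $M := A[I; J \cup \{j^*\}]$ with column order $c_1 < \cdots < c_{r+1}$. Laplace expansion applied after duplicating a row $A[i_t; \cdot]$ of $M$ yields the Pl\"ucker-type identity
\begin{equation*}
\sum_{p=1}^{r+1} (-1)^{p+1} A[i_t; c_p] \det M^{\widehat{p}} = 0,
\end{equation*}
in which the term at $p = s+1$ is exactly the target $\det A[I;J]$, with coefficient the positive entry $A[i_t; j^*]$. Solving for this term expresses it via the other $\det M^{\widehat{p}}$ and positive entries of $A$. For $p \in \{1, r+1\}$ (removing an extremal column of $M$), the resulting minor has strictly smaller $g$ and is positive by secondary induction; for interior $p$, an iterated application of the same identity -- or, equivalently, the Desnanot--Jacobi identity on the augmented $(r+1) \times (r+1)$ submatrix $A[I \cup \{i^*\}; J \cup \{j^*\}]$ whose central $(r-1) \times (r-1)$ minor is positive by the outer induction -- reduces the remaining terms to strictly lower-gap minors.

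The main obstacle is sign control: for $r = 2$, the Pl\"ucker identity directly presents $\det A[I;J]$ as a sum of two positive terms (both auxiliary minors have contiguous columns, divided by the positive entry $A[i_t; j^*]$). For $r \geq 3$, alternating signs appear and some interior $\det M^{\widehat{p}}$ share the same $g$ as $(I,J)$, so a refined argument is needed: either iterate the Pl\"ucker identity along a carefully chosen sequence of column augmentations that successively eliminate interior gaps, or switch to the Desnanot--Jacobi identity on $A[I \cup \{i^*\}; J \cup \{j^*\}]$, arranging through the choice of $i^*$ that all four corner $r \times r$ minors have strictly smaller $g$ and that (when $r + 1 \leq k$) the full $(r+1) \times (r+1)$ determinant is a contiguous minor positive by hypothesis. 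For the boundary case $r = k$, where no $(k+1)$-dimensional hypothesis is directly available, one first establishes positivity of all row-contiguous $k \times k$ minors via column-direction Pl\"ucker iterations with $I$ consecutive, then handles general row gaps by the symmetric row-direction identity, reducing to the already-established row-contiguous case.
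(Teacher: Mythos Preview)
The paper does not prove this theorem: it is stated as the classical Fekete--Schoenberg result and invoked as a black box. So there is no ``paper's own proof'' to compare against; I can only assess your argument on its own merits.

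Your double induction (outer on $r$, inner on the gap $g(I,J)$) is the standard framework, and the $r=2$ case is fine. But for $r\geq 3$ your proposal has a genuine gap. First, in the Desnanot--Jacobi identity on $M=A[I\cup\{i^*\};J\cup\{j^*\}]$ with the four \emph{corner} deletions, the target minor $\det A[I;J]$ simply does not appear: the four corner $r\times r$ minors all retain $j^*$ (you delete $j_1$ or $j_r$, not $j^*$). To make $\det A[I;J]$ appear you must delete the \emph{interior} row/column at positions of $i^*,j^*$, and then the identity necessarily involves $\det M$, an $(r+1)\times(r+1)$ minor. Second, even for $r<k$ this $(r+1)$-minor is not available: the outer induction is ascending in $r$, so at stage $r$ you know nothing about $(r+1)$-minors. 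Your suggestion to force $M$ to be contiguous (hence positive by hypothesis) cannot work in general: $J'=J\cup\{j^*\}$ is contiguous only in the very special case where $J$ has a single gap of width exactly one, and similarly for $I'$. Finally, the ``iterate Pl\"ucker'' alternative is not a proof as stated: the interior minors $\det M^{\widehat p}$ in your Pl\"ucker relation carry the same $g$ as the target, and you have not shown the iteration terminates or that signs cooperate.

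The classical argument avoids $(r+1)\times(r+1)$ minors entirely. One clean route: first establish positivity of all $r\times r$ minors with \emph{consecutive rows} by working inside the $r\times(r{+}1)$ block $A[I;J\cup\{j^*\}]$ and applying Sylvester's identity to an $r\times r$ submatrix (deleting first and last rows and the two extremal columns), which relates the desired $r$-minor to products of $(r-1)$- and $(r-2)$-minors only --- all positive by the outer hypothesis. Then repeat in the row direction. See Karlin's \emph{Total Positivity} or Pinkus's monograph for the details; the point is that the reduction stays at level $r$ and below, which your Desnanot--Jacobi step does not.
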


\begin{proof}[of Theorem \ref{tp-lcp_1}]
	That $(i) \implies (ii)$ follows from Theorem \ref{tp-lcp}. To show  $(ii) \implies (i)$,  by the Fekete--Schoenberg Theorem \ref{fec}, it suffices to show  all contiguous minors of $A$ of size $r \in [k]$ are positive. This is shown by induction on $r$. Let $r=1$ and let $A_1=(a_{ij})$ for some $i\in[m],~j \in [n]$. Then $x^{A_1}=(1)$ and $q^{A_1}=-(a_{ij})$. If $a_{ij}=0$, then $\lcpn{A_1, q^{A_1}}$ has infinitely many solutions, a contradiction. If $a_{ij}<0$, then $\lcpn{A_1,q^{A_1}}$ has two solutions, again a contradiction. Thus all $1\times 1$ minors of $A$ are positive. Let $r\in [k]\setminus\{1\}$ and suppose all contiguous minors of $A$ of size at most $(r-1)$ are positive. Let $A_r$ be an $r \times r$ contiguous submatrix of $A$, and define
	\[x^{A_r}:=(A^{11}_r,0,A^{13}_r,0,\ldots)^T, \qquad z^{A_r}:=(0,A^{12}_r,0,A^{14}_r,\ldots)^T,\qquad q^{A_r}:=-A_rx^{A_r}.\]   By the Fekete--Schoenberg Theorem \ref{fec}, all proper minors of $A_r$ are positive and so $x^{A_r},~z^{A_r}\geq 0$. Thus $x^{A_r}$ is a solution of $\lcpn{A_r, q^{A_r}}$.
	
	Next, observe that, if $A_r$ is singular, then \[A_r\begin{pmatrix}
	A^{11}_r\\0\\ A^{13}_r \\0 \\\vdots
	\end{pmatrix}=A_r \begin{pmatrix}
	0\\A^{12}_r\\ 0\\ A^{14}_r\\\vdots
	\end{pmatrix},
	\]
	so $z^{A_r}$ is another solution of $\lcpn{A_r,q^{A_r}}$, a contradiction. Thus $A_r$ is invertible.
	We now claim that $\det{A_r}>0$. Indeed, suppose $\det{A_r}<0$. Then
	\begin{center}
		$y=A_rz^{A_r}+q^{A_r}=-(\det{A_r})e^1 \geq 0$ \hbox{~~~ and~~~} $y^Tz^{A_r}=0$.
	\end{center}
	Thus, $z^{A_r}$ is again another solution of $\lcpn{A_r, q^{A_r}}$, a contradiction  by (ii). Hence $\det{A_r}>0$ and the proof is complete.
\end{proof}
\begin{rem}
	In Theorem \ref{tp-lcp_1}, instead of the vector $x^{A_r}= (A^{11}_r,0,A^{13}_r,0,\ldots)^T$, we can take ${x^i}^{A_r} := (A^{i1}_r,0,A^{i3}_r,0,\ldots)^T$ for odd $i\in [r]$, or a positive linear combination of some of these vectors. The proof is similar to that of Theorem \ref{tp-lcp_1}, where we define $q^{A_r}$ similarly as in \eqref{tlcp}.  
\end{rem}
\subsection{Totally non-negative matrices and the LCP}
We now turn our attention to identifying $TN_k$ matrices via the LCP. For a totally non-negative matrix $A\in \mathbb{R}^{n \times n}$, $\lcp{A}$ need not have a solution for some $q\in \rn{n}$. For instance, $A=\begin{pmatrix}
0 & 1\\0 & 0
\end{pmatrix}$ is a totally non-negative matrix, but $\lcp{A}$ has no solution for $q=\begin{pmatrix}
0 \\-1
\end{pmatrix}$.\\ For an $n \times n$ real matrix $A$, let $Q_A$ denote the set of all $q \in \mathbb{R}^n$ for which $\sol{A}\neq \emptyset$. If $A \in \mathbb{R}^{n\times n}$ is a matrix with non-negative entries, then $Q_A=\rn{n}$  if and only if all the diagonal entries of $A$ are positive \cite[Chapter~3.8]{CPS09}. Our next result gives a sufficient condition for total non-negativity via the Linear Complementarity Problem. To proceed further, we need a basic result characterizing $TN_k$, which was surprisingly discovered only recently.
\begin{theorem}\cite{CKK21}\label{tnsnr}
	Let $m,n \geq k \geq 1$ be integers. Given $A \in \mathbb{R}^{m \times
		n}$, the following statements are equivalent.
	\begin{enumerate}
		\item The matrix $A$ is totally non-negative of order $k$.
		
		\item Every square submatrix of $A$ of size $r \in [k]$ has the non-strict
		sign non-reversal property with respect to $\mathbb{R}^r$.
		\item Every square submatrix of $A$ of size $r \in [k]$ has the non-strict
		sign non-reversal property with respect to $\altr{r}$.
	\end{enumerate}
\end{theorem}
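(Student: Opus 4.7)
My plan is to prove the cycle $(iii) \Rightarrow (i) \Rightarrow (ii) \Rightarrow (iii)$; the last link is immediate from the inclusion $\altr{r} \subseteq \mathbb{R}^r$.

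For $(i) \Rightarrow (ii)$, I would use an approximation argument based on the standard density of $TP_k$ matrices inside the $TN_k$ matrices (for instance, multiplying by a small totally positive perturbation of the identity, or adding a small multiple of a Vandermonde-type $TP$ matrix, yields a $TP_k$ matrix converging to any prescribed $TN_k$ matrix). Fix an $r \times r$ submatrix $A_r$ of a $TN_k$ matrix $A$ with $r \leq k$, a nonzero $x \in \mathbb{R}^r$, and a sequence of $TP_r$ matrices $A_r^{(n)} \to A_r$. Theorem \ref{tp-sign-rev_k} yields for each $n$ an index $i_n \in [r]$ with $x_{i_n} \neq 0$ and $x_{i_n}(A_r^{(n)} x)_{i_n} > 0$. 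By pigeonhole some index $i$ recurs infinitely often, and passing to the limit along that subsequence gives $x_i (A_r x)_i \geq 0$ with $x_i \neq 0$, which is the non-strict sign non-reversal property on $\mathbb{R}^r$.

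The substantive direction is $(iii) \Rightarrow (i)$, which I would prove by induction on $r$. The base case $r = 1$ follows by applying the non-strict sign non-reversal property to each $1 \times 1$ submatrix $(a_{ij})$ with the admissible test scalar $1 \in \altr{1}$, forcing $a_{ij} \geq 0$. For the inductive step, assume all minors of $A$ of size strictly less than $r$ are non-negative, and suppose for contradiction that some $r \times r$ submatrix $A_r$ has $\det A_r < 0$; then $A_r$ is invertible. I would exhibit a witness $x \in \altr{r}$ violating the hypothesis by taking $y := (-1, +1, -1, +1, \ldots)^T$ and $x := A_r^{-1} y$. Cramer's rule then gives $x_i = (-1)^i (\det A_r)^{-1} \sum_{j=1}^r A^{ji}_r$, and the inductive hypothesis makes every minor $A^{ji}_r$ non-negative. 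Since $A_r$ is invertible, deleting any column of $A_r$ leaves a matrix of full column rank $r-1$, so at least one cofactor in each column-sum is strictly positive, giving $\sum_j A^{ji}_r > 0$ for every $i$. Combined with $\det A_r < 0$, the coordinates $x_i$ strictly alternate in sign, placing $x \in \altr{r}$; yet for every index $i$, one computes $x_i (A_r x)_i = x_i y_i = (\det A_r)^{-1} \sum_j A^{ji}_r < 0$. This contradicts the non-strict sign non-reversal property of $A_r$ with respect to $\altr{r}$.

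The main obstacle I anticipate is the density statement used in $(i) \Rightarrow (ii)$; if a clean reference is inconvenient, my fall-back is to argue directly by decomposing $x = x^+ - x^-$ into positive and negative parts and locating the desired coordinate using entrywise non-negativity of $A_r$ together with a variation-diminishing estimate, much as in the proof of Theorem \ref{tp-sign-rev_k}. By contrast, $(iii) \Rightarrow (i)$ should be clean given the explicit test vector $A_r^{-1}(-1, +1, -1, \ldots)^T$ above; the only care required is the rank computation ensuring strict rather than merely weak sign alternation of the coordinates of $x$.
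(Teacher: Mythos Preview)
The paper does not actually prove Theorem~\ref{tnsnr}; it is quoted verbatim from \cite{CKK21} and used as a black box. So there is no in-paper proof to compare against, and your proposal stands on its own merits.

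Your argument is correct. The density-plus-pigeonhole proof of $(i)\Rightarrow(ii)$ is exactly the mechanism the paper later uses in proving Theorem~\ref{Ttnlast}(2), and Whitney's theorem is available as Theorem~\ref{Twhitney}, so no fall-back is needed. For $(iii)\Rightarrow(i)$ your test vector $x=A_r^{-1}(-1,1,-1,\dots)^T$ is, up to the nonzero scalar $-(\det A_r)^{-1}$, precisely $\adj(A_r)\bd{r}$, the vector that recurs throughout the paper (Theorem~\ref{tp-sign-rev_k_last}, Remark~\ref{snrtnrem}, Theorem~\ref{TN-vandimnew}); your Cramer computation and the rank argument forcing $\sum_j A_r^{ji}>0$ are both sound. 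One cosmetic point: your claim ``Theorem~\ref{tp-sign-rev_k} yields \dots $x_{i_n}\neq 0$'' is fine, but note that this is automatic from $x_{i_n}(A_r^{(n)}x)_{i_n}>0$ rather than being an extra hypothesis of that theorem.
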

We now show how to apply the LCP to deduce total non-negativity:
\begin{prop}\label{tnlcp}
	Let $m,n \geq k \geq 1$ be integers and let $A \in \mathbb{R}^{m \times n}$. Then $A$ is totally non-negative of order $k$ if the following two conditions hold, for $1\leq r \leq k$: \begin{enumerate}
		\item($2\leq r \leq k$):  For every $r \times r$ submatrix $A_r$ of $A$ and for all $q \in Q_{A_r}$ with $q\leq 0$, if $z^1=(x_1,0,x_3,\ldots)^T$ and $z^2=(0,x_2,0,x_4,\ldots)^T$ (where all $x_i>0$) are two solutions of $\lcp{A_r}$, then $A_rz^1=A_rz^2$.
		\item($r=1$): For every $1\times 1$ submatrix $A_1$ of $A$ and for all scalars $q \in Q_{A_1}$, if $z^1$ and $z^2$ are two solutions of $\lcp{A_1}$, then $A_1z^1=A_1z^2$.
	\end{enumerate}
\end{prop}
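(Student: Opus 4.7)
The plan is to deduce, from conditions (1) and (2), the non-strict sign non-reversal property with respect to $\altr{r}$ for every $r\times r$ submatrix of $A$ with $r\in [k]$, and then invoke Theorem~\ref{tnsnr}(3) to conclude that $A$ is $TN_k$. Since vectors in $\altr{r}$ have no zero coordinates, the property reduces to showing: for each $x\in \altr{r}$, there is a coordinate $i$ with $x_i(A_r x)_i \geq 0$.

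I would first handle the base case $r=1$, which amounts to showing that every entry of $A$ is non-negative. This follows from condition (2): if some entry $a<0$, then for any scalar $q>0$ both $z^1=0$ and $z^2=-q/a$ solve $\lcpn{(a),q}$ with $A_1 z^1=0\neq -q=A_1 z^2$, violating (2). In particular, this disposes of $r=1$ in the sign non-reversal reformulation.

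For the general step $2\leq r\leq k$, I would argue by contradiction, mimicking the construction used in the proof of Theorem~\ref{tp-lcp}. Assuming some $x\in \altr{r}$ satisfies $x_i(A_r x)_i<0$ for every $i$, split $x=x^+-x^-$ and $v:=A_r x=v^+-v^-$ into positive and negative parts. The alternating-sign structure of $x$ gives $x^+$ and $x^-$ precisely the two sign patterns appearing in condition (1), and the pointwise strict sign reversal forces $(x^+)^Tv^+=(x^-)^Tv^-=0$. Setting $q:=v^+-A_r x^+=v^--A_r x^-$, both $z^1:=x^+$ and $z^2:=x^-$ become solutions of $\lcp{A_r}$, so $q\in Q_{A_r}$. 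Condition (1) then forces $v^+=A_r z^1=A_r z^2=v^-$, which contradicts $v_i\neq 0$ for every $i$ (since $x_iv_i<0$ at every coordinate).

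The main obstacle is verifying the hypothesis $q\leq 0$ that is needed to invoke condition (1). I expect to check this coordinatewise using the two equivalent formulas for $q$: on an index where $x_i>0$ one has $v_i<0$, so $v_i^+=0$, and the formula $q=v^+-A_rx^+$ together with entrywise non-negativity of $A_r$ (from the base case) gives $q_i=-(A_rx^+)_i\leq 0$; the case $x_i<0$ is handled symmetrically via $q=v^--A_rx^-$. This is precisely why the $r=1$ case must precede the general step.
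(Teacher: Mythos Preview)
Your proof is correct and follows essentially the same route as the paper's: reduce to the non-strict sign non-reversal property via Theorem~\ref{tnsnr}, dispose of $r=1$ directly to get entrywise non-negativity, and for $r\geq 2$ manufacture two LCP solutions $x^\pm$ from a hypothetical sign-reversing $x\in\altr{r}$, with the check $q\leq 0$ relying on the $r=1$ step exactly as you note. One small slip: the displayed chain should read $v^+=A_rz^1+q=A_rz^2+q=v^-$ (since $A_rz^1=v^+-q$, not $v^+$), but the conclusion $v=0$ and the ensuing contradiction are unaffected.
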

\begin{proof}
	First we show that all $1\times 1$ minors of $A$ are non-negative. Let $A_{1}=(a_{ij})$ for some $i\in[m],~j\in [n]$. If $a_{ij}=0$, then we are done. If $a_{ij}<0$, then $\lcp{A_1}$ has two solutions $z^1=(1)$ and $z^2=(0)$, where $q=-(a_{ij})$, but $A_1z^1\neq A_1z^2$, a contradiction. Thus the matrix $A$ has non-negative entries.
	
	Next we claim that the determinant of every square submatrix $A_r$ of $A$ of size $r \in [k]\setminus\{1\}$ is non-negative. Fix $r \in [k]\setminus\{1\}$ and let $A_r$ be an $r \times r$ submatrix of $A$.  By Theorem \ref{tnsnr}, it is sufficient to show that $A_r$ has the non-strict sign non-reversal property with respect to $\altr{r}$. Suppose that $A_r$ does not satisfy this property. Then there exists $x \in \altr{r}$ such that $x_i(A_rx)_i<0$ for all $i \in [r]$. Defining $x^+,x^-,v^+,v^-$ and $q$ as in the proof of Theorem \ref{tp-lcp}, we conclude that $x^+$ and $x^-$ are two solutions of $\lcp{A_r}$. Also, $x^+,~x^-$ have sign patterns $\begin{pmatrix}
	+ \\ 0 \\+\\0\\\vdots
	\end{pmatrix}$ and $\begin{pmatrix}
	0 \\ + \\0\\+\\\vdots
	\end{pmatrix}$ respectively (or vice-versa), so $q\leq 0$ (since $A\geq 0$). Since $Ax \in \altr{r}$, by \eqref{thrmaeq1}, we have
	\begin{center}
		$A_rx^+ + q=v^+\neq v^-= A_rx^-+q.$
	\end{center}
	Thus $A_rx^+\neq A_rx^-$, a contradiction. Hence $A$ is $TN_k$. \end{proof}
The converse of the above result need not be true. We illustrate this with an example.
\begin{example}
	Let $A=\begin{pmatrix}
	2 & 1 & 1\\2 & 1 &1\\ 1 & 1 & 1
	\end{pmatrix}$ and $q=\begin{pmatrix}
	-3\\-3\\-2
	\end{pmatrix}$. Then $A$ is a totally non-negative matrix and $z^1=\begin{pmatrix}
	1\\0\\1
	\end{pmatrix}$ and $z^2=\begin{pmatrix}
	0\\3\\0
	\end{pmatrix}$ are two solutions of $\lcp{A}$, but $Az^1\neq Az^2$.
\end{example}
The next result improves on the previous one, by giving a sufficient condition for total non-negativity using the LCP at a single vector $q$ (for each square submatrix of $A$).
\begin{prop}\label{tnlcp1}
	Let $m,n \geq k \geq 1$ be integers and $A \in \mathbb{R}^{m \times n}$. Then $A$ is $TN_k$ if for every square submatrix $A_r$ of $A$ of size $r \in [k]$, if $z^1$ and $z^2$ are two solutions of $\lcpn{A_r,q^{A_r}}$ then $A_rz^1=A_rz^2$, where $q^{A_r}$ is defined as in \eqref{tlcp}.
\end{prop}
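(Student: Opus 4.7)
The plan is to adapt the inductive argument behind Theorem~\ref{tp-lcp_1}, with two modifications reflecting the weaker hypothesis. First, since the hypothesis here only demands $A_r z^1 = A_r z^2$ for any two LCP-solutions (rather than $z^1 = z^2$), we will only be able to conclude $\det A_r \geq 0$ (instead of $\det A_r > 0$), which is exactly what $TN_k$ requires. Second, because the test is posed for \emph{every} square submatrix of $A$ (not merely the contiguous ones), we can bypass any Fekete--Schoenberg-style reduction and show directly, by induction on $r \in [k]$, that every $r \times r$ minor of $A$ is non-negative.

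For the base case $r = 1$, take $A_1 = (a_{ij})$; then $x^{A_1} = (1)$ and $q^{A_1} = -(a_{ij})$. If $a_{ij} < 0$, then the scalars $1$ and $0$ are both solutions of $\lcpn{A_1, q^{A_1}}$, yet they differ under $A_1$, contradicting the hypothesis; hence every entry of $A$ is non-negative. For the inductive step, fix $r \geq 2$ and an $r \times r$ submatrix $A_r$ of $A$, and assume all $(r-1)\times(r-1)$ minors of $A$ are non-negative. Introduce the companion vector
\[
z^{A_r} := (0, A^{12}_r, 0, A^{14}_r, \ldots)^T,
\]
which, together with $x^{A_r}$, lies in the non-negative orthant by the inductive hypothesis. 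Cofactor expansion along the first row of $A_r$ gives the key identity $A_r x^{A_r} - A_r z^{A_r} = (\det A_r)\, e^1$, so that $A_r z^{A_r} + q^{A_r} = -(\det A_r)\, e^1$. If $\det A_r < 0$, then $y := -(\det A_r)\, e^1 \geq 0$, and since $z^{A_r}_1 = 0$ we have $(z^{A_r})^T y = 0$; thus $z^{A_r}$ is a solution of $\lcpn{A_r, q^{A_r}}$ satisfying $A_r z^{A_r} \ne A_r x^{A_r}$. This contradicts the hypothesis and forces $\det A_r \geq 0$, completing the induction.

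The one point meriting care is that the case $\det A_r = 0$ does \emph{not} furnish a contradiction: there, $y = 0$ and so $z^{A_r}$ is still a solution of $\lcpn{A_r, q^{A_r}}$, but now $A_r z^{A_r} = A_r x^{A_r}$, in harmony with the hypothesis — which is consistent with $\det A_r = 0$ being permitted in $TN_k$. So the hypothesis cleanly detects exactly strict negativity of $\det A_r$, and beyond this identification I do not anticipate any significant obstacle: the argument is essentially a direct specialization of the reasoning behind Theorem~\ref{tp-lcp_1} to the non-negative setting.
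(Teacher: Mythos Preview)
Your proposal is correct and follows essentially the same approach as the paper: induction on $r$, with the base case handled by exhibiting the two solutions $1$ and $0$ when $a_{ij}<0$, and the inductive step handled by showing that if $\det A_r<0$ then both $x^{A_r}$ and $z^{A_r}=(0,A^{12}_r,0,A^{14}_r,\ldots)^T$ solve $\lcpn{A_r,q^{A_r}}$ yet have distinct images under $A_r$ (via the identity $A_r(x^{A_r}-z^{A_r})=(\det A_r)e^1$). The paper's proof is the same argument, phrased as ``repeating the proof of Theorem~\ref{tp-lcp_1}''.
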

\begin{proof}
	We show that $\det {A_r}\geq 0$ for all $r \times r$ submatrices $A_r$ of $A$, by induction on $r \in [k]$. The base case $r=1$ can be proved similarly to Proposition \ref{tnlcp}.
	
	Let $r \in [k]\setminus \{1\}$ and suppose that all the minors of $A$ of size at most $(r-1)$ are non-negative. Let $A_r$ be a square submatrix of $A$ of size $r$. If $\det {A_r}=0$, then we are done. If $\det {A_r}<0$, repeating the proof of Theorem \ref{tp-lcp_1}, once again we have $x^{A_r}=\begin{pmatrix}
	A^{11}_r\\0\\ A^{13}_r \\0 \\\vdots
	\end{pmatrix}$ and $z^{A_r}= \begin{pmatrix}
	0\\A^{12}_r\\ 0\\ A^{14}_r\\\vdots
	\end{pmatrix}$ are two distinct solutions of $\lcpn{A_r,q^{A_r}}$, but $Ax^{A_r}\neq Az^{A_r}$. Thus $\det {A_r}>0$ and hence $A$ is $TN_k$.
\end{proof}
\begin{rem}
	The converse of the preceding proposition need not be true. For instance, $A=\begin{pmatrix}
	0 & 0 & 0\\2 & 2 &1\\ 1 & 1 & 1
	\end{pmatrix}$ is a totally non-negative matrix and $q^{A}=-\begin{pmatrix}
	0\\2\\1
	\end{pmatrix}$. Then $z^1=\begin{pmatrix}
	1\\0\\0
	\end{pmatrix}$ and $z^2=\begin{pmatrix}
	2\\0\\0
	\end{pmatrix}$ are two solutions of $\lcpn{A,q^A}$, but $Az^1\neq Az^2$.
\end{rem}
In the last part of this section, we discuss the solution set of $\lcp{A}$, with $A \in \mathbb{R}^{n \times n}$ being a totally non-negative matrix. First we recall a 1968 result of Karlin for nonsingular totally non-negative matrices.
\begin{lemma}[(Karlin,~\cite{K68})]\label{nontn}
	Let $A \in \mathbb{R}^{n \times n}$ be a nonsingular totally non-negative matrix. Then all the principal minors of $A$ are positive.
\end{lemma}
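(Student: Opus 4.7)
My plan is to deduce the lemma from the LU-decomposition of nonsingular TN matrices. A classical theorem (due to Cryer; see Chapter 2 of \cite{fallat-john}) states that every nonsingular TN matrix $A \in \mathbb{R}^{n \times n}$ admits a factorization $A = LU$ with $L$ lower triangular TN and $U$ upper triangular TN, both of which are themselves nonsingular. Granted this decomposition, the conclusion falls out of the Cauchy-Binet formula with almost no extra work.

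First I would verify that the diagonal entries of $L$ and $U$ are strictly positive. Since both factors are triangular, $\det(L) = \prod_i L_{ii}$ and $\det(U) = \prod_i U_{ii}$, and the nonsingularity of $A = LU$ forces both products to be nonzero. Combined with the non-negativity of the $1 \times 1$ minors of a TN matrix, this yields $L_{ii} > 0$ and $U_{ii} > 0$ for every $i$.

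Next, for any nonempty $I \subseteq [n]$, I would apply the Cauchy-Binet formula to the principal submatrix $A[I, I]$ (the submatrix with rows and columns both indexed by $I$):
\[
\det(A[I, I]) \;=\; \sum_{K \subseteq [n],\ |K| = |I|} \det(L[I, K])\, \det(U[K, I]).
\]
Every summand on the right is non-negative because $L$ and $U$ are TN. The single summand $K = I$ equals $\det(L[I, I]) \det(U[I, I])$; since extracting matching rows and columns of a triangular matrix preserves triangularity, this equals $\prod_{i \in I} L_{ii} \cdot \prod_{i \in I} U_{ii} > 0$. Hence $\det(A[I, I]) > 0$, which is exactly the claim of the lemma.

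The main obstacle is the invocation of the TN-preserving LU-decomposition, which is a nontrivial structural theorem and is where essentially all the combinatorial content of total non-negativity is hidden. An alternative route would be to first establish the TN-analogue of Fischer's inequality $\det(A[I, I]) \det(A[I^c, I^c]) \geq \det(A)$, from which the lemma is immediate (both factors on the left are non-negative, while their product dominates $\det(A) > 0$, forcing both to be strictly positive); but this inequality itself most naturally comes out of the same LU-plus-Cauchy-Binet computation, so the essential work is the same. In any case, the argument is structural and does not appear to route through the LCP or sign-non-reversal machinery developed in this paper.
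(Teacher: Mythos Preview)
The paper does not supply its own proof of this lemma; it is simply cited as a classical result of Karlin \cite{K68} and then used. So there is no in-paper argument to compare against.

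Your proof is correct. The route via Cryer's LU factorization together with Cauchy--Binet is one of the standard ways to establish this fact, and the computation you outline (the $K=I$ summand is strictly positive because principal submatrices of a triangular matrix remain triangular with the same diagonal entries) is valid. One small point worth making explicit: the very existence of an LU factorization for a general nonsingular matrix already requires the leading principal minors to be nonzero, so Cryer's theorem, as usually stated, has this built into its proof rather than its hypothesis. For nonsingular TN matrices the nonvanishing of leading principal minors is itself a standard preliminary lemma (proved, for instance, by a ``shadow'' argument showing that a vanishing leading principal minor forces $\det A = 0$), so there is no circularity; but since you are presenting this as a self-contained derivation of the full lemma, it would be cleaner to flag that the leading-principal-minor case is handled inside the proof of Cryer's theorem rather than being assumed.

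Your alternative via the Fischer-type inequality $\det A \leq \det A[I,I]\,\det A[I^c,I^c]$ is also a perfectly good route, and as you note it ultimately rests on the same machinery. Either approach is entirely consistent with the paper, which treats the lemma as a black-box citation and does not develop the TN structural theory itself.
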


Thus, if $A$ is a nonsingular totally non-negative matrix, by Theorem \ref{lcpp}, $\lcp{A}$ has a unique solution for all $q\in \rn{n}$.

Given a matrix $A \in \mathbb{R}^{n \times n}$, a solution $x$ of $\lcp{A}$ is called a \textit{nondegenerate solution} if $x_i \neq (Ax+q)_i$ for all $i \in [n]$. Note that a solution $x$ of the $\lcp{A}$ is nondegenerate if and only if the support of $x$ and the support of $y:=Ax+q$ are complementary index sets in $[n]$. A vector $q \in \mathbb{R}^n$ is called \textit{nondegenerate} with respect to $A$ if all the solutions of $\lcp{A}$ are nondegenerate. With this information in hand, we now present a partial converse of Proposition \ref{tnlcp}.

\begin{lemma}
	Let $A \in \mathbb{R}^{n \times n}$ be a totally non-negative matrix and let $q \in \mathbb{R}^n$ be a nondegenerate vector with respect to $A$. If $z^1=(x_1,0,x_3,\ldots)^T$ and $z^2=(0,x_2,0,x_4,\ldots)^T$ are two solutions of $\lcp{A}$ with all $x_i>0$, then $A_rz^1=A_rz^2$.
\end{lemma}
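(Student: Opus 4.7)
The plan is to introduce $w := z^1 - z^2$ and prove that under the hypotheses the matrix $A$ would strictly reverse the sign of $w$ at every coordinate, contradicting the non-strict sign non-reversal property of $TN_n$ matrices supplied by Theorem \ref{tnsnr}. The upshot will be that the conclusion holds vacuously: no two solutions of the prescribed form can coexist once $q$ is nondegenerate. This will dovetail with the preceding counterexample, in which $A z^1 + q = 0$ makes $z^1$ a degenerate solution---exactly the loophole the nondegeneracy hypothesis closes.

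I would first observe that, since every $x_i > 0$ and the supports of $z^1$ and $z^2$ are disjoint by construction, the vector $w$ has strictly alternating sign pattern $(x_1, -x_2, x_3, -x_4, \ldots)^T$ with every entry nonzero, so $w \in \altr{n}$. Writing $y^i := A z^i + q$ for $i = 1,2$, complementary slackness $(z^i)^T y^i = 0$ together with $z^i, y^i \geq 0$ immediately forces $y^1_j = 0$ for every odd $j$ and $y^2_j = 0$ for every even $j$.

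The decisive step is the use of nondegeneracy: since $q$ is nondegenerate with respect to $A$, both $z^1$ and $z^2$ are nondegenerate solutions, so the support of $z^i$ and the support of $y^i$ are complementary subsets of $[n]$. Combined with the previous paragraph, this yields the \emph{strict} inequalities $y^1_j > 0$ for every even $j$ and $y^2_j > 0$ for every odd $j$. From $Aw = A z^1 - A z^2 = y^1 - y^2$ one then reads off $(Aw)_j = -y^2_j < 0$ for odd $j$ and $(Aw)_j = y^1_j > 0$ for even $j$, so $w_j (Aw)_j < 0$ at every $j \in [n]$.

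To close the argument I would invoke Theorem \ref{tnsnr} on the $TN_n$ matrix $A$ and the vector $w \in \altr{n}$: it furnishes some index $j$ with $w_j \neq 0$ and $w_j (Aw)_j \geq 0$, contradicting the strict sign reversal just derived. Hence no pair $z^1, z^2$ satisfying the hypotheses exists, and the implication is vacuously true (in particular $A z^1 = A z^2$). The main obstacle is a conceptual one: the argument hinges on the \emph{strict} positivity of $y^1$ on even indices and of $y^2$ on odd indices, and this is precisely what nondegeneracy of $q$ supplies and what the preceding counterexample lacks.
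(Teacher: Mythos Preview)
Your proposal is correct and follows essentially the same route as the paper: form the difference $w=z^1-z^2\in\altr{n}$, use nondegeneracy of $q$ to force $y^1$ strictly positive on even indices and $y^2$ strictly positive on odd indices, conclude $w_j(Aw)_j<0$ for every $j$, and contradict Theorem~\ref{tnsnr}. The only difference is framing: the paper assumes $Az^1\neq Az^2$ and derives a contradiction, whereas you observe (correctly) that the contradiction never uses this extra assumption, so the hypotheses are actually vacuous and no such pair $z^1,z^2$ can coexist when $q$ is nondegenerate.
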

\begin{proof}
	Suppose that $A \in \mathbb{R}^{n \times n}$ is a totally non-negative matrix and $q$ is a nondegenerate vector with respect to $A$. Let $z^1=(x_1,0,x_3,\ldots)^T$ and $z^2=(0,x_2,0,x_4,\ldots)^T$ be two solutions of $\lcp{A}$ such that all $x_i>0$ and $Az^1\neq Az^2$. Let $y^i=Az^i+q$ for $i=1,2$ and let $z=z^1-z^2$ and $y=y^1-y^2$. Then $y=Az$ and $y,z \in \altr{n}$, since $q$ is nondegenerate. Since $z^1$ and $z^2$ are solutions of $\lcp{A}$, $\sgn(y_j)=-\sgn(z_j)$ for all $j \in [n]$. Thus $z_i(Az)_i<0$ for all $i \in [n]$, a contradiction by Theorem \ref{tnsnr}. Hence $Az^1=Az^2$.
\end{proof}
In the next result we put certain conditions on the matrix $A$ instead of the vector $q$, and present another partial converse of Proposition \ref{tnlcp}.
\begin{theorem}
	Let $A \in \mathbb{R}^{n \times n}$ be a totally non-negative matrix such that whenever a set of columns of $A$ forms a basis of its column space, the corresponding principal submatrix is invertible. For all $q \in Q_A$, if $z^1$ and $z^2$ are two solutions of $\lcp{A}$, then $Az^1=Az^2$.
\end{theorem}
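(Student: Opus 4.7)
The plan is to argue by contradiction: I would assume two solutions $z^1, z^2 \in \sol{A}$ satisfy $Az^1 \neq Az^2$, set $w := z^1 - z^2$, $y^i := Az^i + q$, and $v := Aw = y^1 - y^2 \neq 0$. A first move is a coordinatewise case analysis, exactly as in the proofs of Theorem~\ref{tp-lcp} and the preceding lemma, which shows $w_j v_j \leq 0$ for all $j \in [n]$: in the ``both $z^i_j > 0$'' case both $y^i_j$ vanish and $v_j = 0$, while in the cases where exactly one of $z^1_j, z^2_j$ is positive, $w_j$ and $v_j$ take opposite (non-strict) signs. Next I would reduce to the case where each support $S_i := \{j : z^i_j > 0\}$ indexes a linearly independent set of columns of $A$: if $A_{:, S_i}$ has dependent columns, pick a nonzero $u \in \ker A_{:, S_i}$, extend by zero to $\tilde u \in \mathbb{R}^n$, and replace $z^i$ by $z^i + t \tilde u$ with $t$ of smallest modulus zeroing out one coordinate. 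The shift leaves $Az^i$ and $y^i$ invariant and preserves complementarity since $\tilde u^T y^i = \sum_{j \in S_i} u_j y^i_j = 0$ (each $y^i_j$ vanishes on $S_i$). Iterating, both $S_1, S_2$ index linearly independent column sets, without altering the original $Az^i$.

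With $S_i$ indexing linearly independent columns, I would extend each $S_i$ to a column basis $J_i$ of $A$, so $|J_i| = \mathrm{rank}(A)$. The hypothesis gives that $A_{J_iJ_i}$ is invertible, and Karlin's Lemma~\ref{nontn} upgrades this to the $P$-matrix property. Theorem~\ref{lcpp} then yields that $\lcpn{A_{J_iJ_i}, q_{J_i}}$ has a unique solution. Since $z^i$ vanishes on $J_i^c \subseteq S_i^c$, a direct check (using $(A_{J_i, :} z^i)_{J_i} = A_{J_iJ_i} z^i_{J_i}$) shows that $z^i_{J_i}$ is feasible and complementary for this subLCP and must therefore be its unique solution.

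To close the contradiction, in the easy case when $A_{:, S_1 \cup S_2}$ has linearly independent columns, one can pick a common basis $J \supseteq S_1 \cup S_2$; uniqueness forces $z^1_J = z^2_J$, so $z^1 = z^2$ and $Az^1 = Az^2$, contradicting the initial assumption. The main obstacle is the general case, where $A_{:, S_1 \cup S_2}$ is dependent and no single column basis extends both supports. Here my plan is to invoke Theorem~\ref{tnsnr} on the nonzero $w$: combined with $w_j v_j \leq 0$, the non-strict sign non-reversal property delivers some $j^* \in \mathrm{supp}(w)$ with $w_{j^*} v_{j^*} = 0$, hence $v_{j^*} = 0$. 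I would then iterate, either by descending to the TN principal submatrix of $A$ on $[n] \setminus \{j^*\}$ (after checking that the column-basis/invertibility hypothesis transfers) or by pivoting $z^1, z^2$ within their $A$-image classes toward a common basis; each pass kills another coordinate of $v$, and forcing $v = 0$ provides the contradiction. Executing this inductive step cleanly -- in particular, propagating the hypothesis to the restricted problem -- is the delicate technical hurdle.
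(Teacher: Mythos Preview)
Your setup (define $w=z^1-z^2$, $v=Aw$, check $w_jv_j\le 0$) matches the paper, and your pivoting idea is the right one, but you apply it in the wrong place and this is what creates the ``hard case'' that you cannot close. Reducing $S_1$ and $S_2$ separately to independent column sets does not force $S_1\cup S_2$ to be independent, so your easy case need not occur; and your proposed iteration in the hard case breaks down: once you delete the coordinate $j^*$ (where $w_{j^*}\neq 0$) and pass to the principal submatrix $A'$ on $[n]\setminus\{j^*\}$, you lose the relation $A'w'=v'$, since $(A'w')_i=(Aw)_i-A_{ij^*}w_{j^*}=v_i-A_{ij^*}w_{j^*}$. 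So the inequality $w'_i(A'w')_i\le 0$ is no longer available, Theorem~\ref{tnsnr} gives you nothing new, and the induction stalls. Your own remark that ``propagating the hypothesis to the restricted problem is the delicate technical hurdle'' is accurate; as written, this step does not go through.

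The paper avoids the case split entirely by pivoting on the \emph{difference} $z:=z^1-z^2$ rather than on $z^1,z^2$ individually. One iteratively shrinks the support $J_z$ of $z$: whenever the columns of $A$ indexed by $J_z$ are dependent, choose a nonzero $v$ in their kernel (extended by zeros), set $\alpha:=\min\{|z_i|/|v_i|:v_i\neq 0\}$, and replace $z$ by $z-\sgn(z_{i_0})\sgn(v_{i_0})\alpha v$. Each step preserves $Az=y$, keeps the sign pattern of $z$ inside that of the original, and kills at least one coordinate. The outcome is a vector $x$ with support $J_x\subseteq J_z$, $\sgn(x_i)=\sgn(z_i)$ on $J_x$, $Ax=y$, and the columns of $A$ indexed by $J_x$ linearly independent. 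Now extend $J_x$ to a column basis; the hypothesis plus Karlin's Lemma~\ref{nontn} makes $A_{J_xJ_x}$ a $P$-matrix. Since $x$ is supported on $J_x$, one has $(A_{J_xJ_x}x_{J_x})_i=y_i$ for $i\in J_x$, and the sign relation gives $x_i(A_{J_xJ_x}x_{J_x})_i\le 0$ for every $i\in J_x$. This is exactly sign reversal by a $P$-matrix, contradicting Gale--Nikaid\^o. No LCP-uniqueness argument on subproblems, no common basis, and no iteration on $v$ is needed.
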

\begin{proof}
	We prove this by contradiction. Let $q \in Q_A$ and let $z^1$ and
	$z^2$ be two distinct solutions of $\lcp{A}$ such that $Az^1 \neq
	Az^2$. Let $z=z^1-z^2$ and $y=y^1-y^2$, where $y^1:=Az^1+q$ and
	$y^2:=Az^2+q$. By the definition of the LCP, $z^k_i
	y^k_i = 0$ and $z^k_i, y^k_i \geq 0$ for $k=1,2$ and all $i$, so
	\[z_iy_i=(z^1_i-z^2_i)(y^1_i-y^2_i)=-z^1_iy^2_i-z^2_iy^1_i\leq 0
	\hbox{~~for~} i\in [n].\] Also, $y\neq 0$, since $Az^1\neq
	Az^2$. Let $J_z \subseteq [n]$ denote the support of $z$. Then
	\begin{equation}
	y_i\leq 0 ~~(\geq 0) \hbox{~~whenever~~} z_i>0 ~~(<0) \hbox{~~for all~~}i \in J_z \label{tnconveq1}.
	\end{equation}
	
	We first claim that there exists $x \in \mathbb{R}^n$ with support $J_x\subseteq J_z$ such that $\sgn(z_i)=\sgn(x_i)$ for $i \in J_x$, $Ax=y$, and the columns of $A$ corresponding to $J_x$ are linearly independent. If the columns of $A$ corresponding to $J_z$ are linearly independent then we are done. Otherwise there exists a nonzero vector $v \in \mathbb{R}^n$ such that $Av=0$ and $v_j=0$ for $j\in [n] \setminus J_z$. Define
	\begin{equation}
	\alpha :=\min \left\{ \frac{\vert z_i\vert}{\vert v_i\vert}: i \in J_z ~\hbox{and}~ v_i \neq 0 \right\}, \qquad x^1:=z-\sgn(z_{i_0})\sgn(v_{i_0})\alpha v,
	\end{equation} where $i_{0}\in J_z$ is the index where the minimum $\alpha$ is attained. Then $Ax^1=y$ and $\sgn(z_i)=\sgn(x^1_i)$ for $i \in J_{x^1}$. Since $x^1_{i_{0}}=\sgn(z_{i_0})\left[\vert z_{i_{0}} \vert -\sgn(v_{i_0})\alpha v_{i_{0}}\right]$, for at least one component $i \in J_z$, $x^1_i=0$. Now, if the columns of $A$ corresponding to $J_{x^1}$ are linearly independent, then we are done. Otherwise, we apply the same technique to the new vector and keep repeating. Thus there exists $x \in \mathbb{R}^n$ with support $J_x \subsetneq J_z$ such that\begin{equation} \sgn(z_i)=\sgn(x_i)  \hbox{~~for~~} i \in J_x, \qquad Ax=y\label{tnconveq2}
	\end{equation}
	and the columns of $A$ corresponding to $J_x$ are linearly independent. Since $A$ is totally non-negative, by the hypothesis and Lemma \ref{nontn}, all the principal minors of $A_{J_x}$ are positive, where $A_{J_x}$ denotes the principal submatrix of $A$ whose rows and columns are indexed by $J_x$. By \eqref{tnconveq1} and \eqref{tnconveq2}, $A_{J_x}$ reverses the sign of $x_{J_x}$, a contradiction by \cite[Theorem 2]{gale-nikai-pmat}. Thus $Az^1=Az^2$.
\end{proof}

The next remark, which is a standalone observation that may
be of independent interest, suggests steps to solve the problem $\lcp{A}$
with $A$ being a $TP/TN$ matrix.
\begin{rem} 
	Let $A \in \mathbb{R}^{n \times n}$ be a totally positive/non-negative matrix and let $q \in \rn{n}$. If a coordinate of $q$ is non-negative, say $q_1\geq 0$, then we obtain the submatrix $B$ from $A$ by deleting the corresponding row and column of $A$, and similarly we obtain a vector $q^2$ from $q$. Next we try to solve the new Linear Complementarity Problem $\lcpn{B,q^2}$. If $x^2$ is a solution of the new problem $\lcpn{B,q^2}$ then $x:=(0,{(x^2)}^T)^T$ is a solution of the original problem $\lcp{A}$.
\end{rem}

\section{Theorem \ref{TPsnr}: Sign non-reversal property for totally positive matrices}\label{snrsec}

In recent joint work \cite{CKK21}, Theorem \ref{tnsnr} above had a fourth part in terms of a \textit{single} vector, for characterizing $TN_k$ matrices. Similarly, we gave a new test for total positivity using the sign non-reversal property at a single vector, but under certain additional conditions:
\begin{theorem}\cite{CKK21}\label{tp-sign-rev_k_last}
	Let $m,n \geq k \geq 1$ be integers.
	Given $A \in \mathbb{R}^{m \times n}$, the following statements
	are equivalent.
	\begin{enumerate}
		\item[(1)] The matrix $A$ is totally positive of order $k$.
		\item[(2)] For every $r \in [k]$ and contiguous $r
		\times r$ submatrix $A_r$ of $A$, define the vectors
		\begin{equation}\label{Ezbold}
		\bd{r}:=(1,-1,1,\ldots (-1)^{r-1})^T,\qquad z^{A_r} := (\det{A_r}) \adj(A_r) \bd{r}.
		\end{equation}
		Now:
		(i)~$A_rx \neq 0$ for all $x \in \altr{r}$; and
		(ii)~$A_r$ has the non-strict sign non-reversal property with respect to $z^{A_r}$.
	\end{enumerate}
\end{theorem}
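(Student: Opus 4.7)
The plan is to establish the equivalence by proving the two directions separately, with $(2)\implies(1)$ proceeding by induction on $r$ and invoking the Fekete--Schoenberg theorem (Theorem \ref{fec}).

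For $(1)\implies(2)$, observe that each contiguous square submatrix $A_r$ with $r\leq k$ is $TP$, so it is invertible (giving (i) at once) and all of its cofactors $A^{ji}_r$ are positive. A short direct computation yields
\[
(\adj(A_r)\bd{r})_i = (-1)^{i-1} \sum_{j=1}^r A^{ji}_r, \qquad A_r z^{A_r} = (\det A_r)^2 \bd{r},
\]
the second identity coming from $A_r\adj(A_r)=(\det A_r)I$. Since $\det A_r>0$, both $z^{A_r}$ and $A_r z^{A_r}$ have the sign pattern of $\bd{r}$ coordinate-wise, so $z^{A_r}_i (A_r z^{A_r})_i>0$ for every $i\in[r]$, which is strictly stronger than (ii).

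For $(2)\implies(1)$, I would induct on $r\in[k]$ to show that every contiguous $r\times r$ submatrix has positive determinant; Theorem \ref{fec} then delivers $TP_k$. The base $r=1$ is immediate by unpacking (i) and (ii) on a single scalar entry. For the inductive step with $r\geq 2$, Theorem \ref{fec} applied to the inductive hypothesis makes $A$ totally positive of order $r-1$, so every proper minor of $A_r$ is positive. The same cofactor computation as above then shows that $\adj(A_r)\bd{r}\in\altr{r}$. Applying hypothesis (i) to this particular alternating-signed vector, combined with the Cramer-type identity $A_r\adj(A_r)\bd{r}=(\det A_r)\bd{r}$, forces $\det A_r\neq 0$. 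It remains to rule out $\det A_r<0$, which follows because for every $i$
\[
z^{A_r}_i (A_r z^{A_r})_i \;=\; (\det A_r)^3 \sum_{j=1}^r A^{ji}_r,
\]
whose sign equals that of $(\det A_r)^3$ wherever $z^{A_r}_i\neq 0$. Hypothesis (ii) supplies such an index at which this product is $\geq 0$, forcing $\det A_r>0$.

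The only mild subtlety is the role of condition (i): it is not exploited to exhibit a direct sign reversal, but rather is applied to the single alternating vector $\adj(A_r)\bd{r}$ (whose sign pattern is secured by induction) and combined with the adjugate identity to eliminate singularity; condition (ii), evaluated at the single test vector $z^{A_r}$, then pins down the correct sign of $\det A_r$. Notably, this argument does not need to reinvoke the full sign non-reversal characterization of Theorem \ref{tp-sign-rev_k}; the induction + Fekete--Schoenberg pipeline already captures enough structure.
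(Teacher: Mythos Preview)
The paper does not supply its own proof of this statement: Theorem~\ref{tp-sign-rev_k_last} is quoted from \cite{CKK21} as motivation for the improved Theorem~\ref{TPsnr}, and no argument for it appears here. That said, your proof is correct. The cofactor computation $(\adj(A_r)\bd{r})_i=(-1)^{i-1}\sum_j A^{ji}_r$ and the adjugate identity $A_r\adj(A_r)=(\det A_r)I$ do exactly the work you claim: by induction and Fekete--Schoenberg (Theorem~\ref{fec}) all proper minors $A^{ji}_r$ are positive, so $\adj(A_r)\bd{r}\in\altr{r}$; condition~(i) applied to this single vector kills $\det A_r=0$, and condition~(ii) then forces $\det A_r>0$ via $z^{A_r}_i(A_rz^{A_r})_i=(\det A_r)^3\sum_j A^{ji}_r$. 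The base case $r=1$ also unpacks cleanly as you indicate.

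Structurally your argument is the same template the paper uses for Theorem~\ref{TPsnr}: induction on $r$, Fekete--Schoenberg to reduce to contiguous minors, positivity of proper minors from the inductive hypothesis, and the adjugate identity to read off the sign of $\det A_r$ from the single test vector. The only difference is that here the extra hypothesis~(i) is what rules out singularity, whereas in Theorem~\ref{TPsnr} the \emph{strict} sign non-reversal at $x^{A_r}=\adj(A_r)e^1$ handles both nonvanishing and positivity of $\det A_r$ in one stroke. Your closing remark that Theorem~\ref{tp-sign-rev_k} is not needed is on point and matches the paper's philosophy for Theorem~\ref{TPsnr}.
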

In the next result, we drop the condition $(i)$ i.e, $A_rx \neq 0$ for all $x \in \altr{r}$, and identify a new test vector for the  sign non-reversal property which is simpler than \eqref{Ezbold}. In particular, we are able to characterize total positive matrices using the sign non-reversal property truly at a single vector.
\begin{utheorem}\label{TPsnr}
	Let $m,n \geq k \geq 1$ be integers.
	Given $A \in \mathbb{R}^{m \times n}$, the following statements
	are equivalent.
	\begin{enumerate}
		\item The matrix $A$ is totally positive of order $k$.
		\item For every $r \in [n]$ and contiguous $r
		\times r$ submatrix $A_r$ of $A$, define the vector
		\begin{equation}\label{Ezb}
		x^{A_r}:= (A^{11}_r,-A^{12}_r,\ldots , (-1)^{r-1}A^{1 r}_r)^T.
		\end{equation}
		Then $A_r$ has the sign non-reversal property with respect to $x^{A_r}$.
	\end{enumerate}
\end{utheorem}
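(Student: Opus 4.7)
The plan is to observe the key algebraic identity that $A_r x^{A_r} = (\det A_r)\, e^1$, obtained from Laplace expansion of $\det A_r$ along the first row together with the alien cofactor rule (expanding along row $i \geq 2$ using first-row cofactors gives the determinant of a matrix with two equal rows, namely $0$). Once this identity is in hand, both directions of the equivalence fall out easily, the only nontrivial piece being control of the signs of the coordinates of $x^{A_r}$.

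For $(i) \Rightarrow (ii)$: if $A$ is $TP_k$, then for each contiguous $r \times r$ submatrix $A_r$ with $r \in [k]$, every minor $A^{1j}_r$ is positive (being an $(r-1)\times(r-1)$ minor of $A$), so $x^{A_r} \in \altr{r}$; in particular $x^{A_r} \neq 0$. The identity $A_r x^{A_r} = (\det A_r)\, e^1$ then gives
\[
x^{A_r}_1 (A_r x^{A_r})_1 = A^{11}_r \cdot \det A_r > 0,
\]
so the sign non-reversal property at coordinate $i = 1$ holds.

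For $(ii) \Rightarrow (i)$: I would proceed by induction on $r \in [k]$, and invoke the Fekete--Schoenberg Theorem \ref{fec} at the end. The base case $r = 1$ is immediate: $x^{A_1} = (1)$ (determinant of the empty matrix), so the sign non-reversal condition $1 \cdot a_{ij} > 0$ yields positivity of all entries. For the inductive step, assume all contiguous minors of sizes $<r$ are positive; by Theorem \ref{fec} applied up to order $r-1$, \emph{all} minors of $A$ of sizes $<r$ are positive (not only the contiguous ones). Fix a contiguous $r \times r$ submatrix $A_r$. Each cofactor $A^{1j}_r$ is an $(r-1)\times(r-1)$ minor of $A$ (with rows contiguous but columns possibly not), hence positive. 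Thus $x^{A_r} \in \altr{r}$ and is nonzero. The identity $A_r x^{A_r} = (\det A_r)\, e^1$ forces $(A_r x^{A_r})_i = 0$ for $i \geq 2$, so the hypothesis that the sign non-reversal property holds at $x^{A_r}$ can only be witnessed at $i = 1$, giving $A^{11}_r \cdot \det A_r > 0$ and hence $\det A_r > 0$. This completes the induction, and applying Theorem \ref{fec} one last time yields that $A$ is $TP_k$.

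The main (mild) obstacle is making sure that at the inductive step the cofactors $A^{1j}_r$ are accessible from the induction hypothesis, since deleting a middle column of $A_r$ produces a minor of $A$ whose column indices are \emph{not} contiguous. The clean way around this is to upgrade the induction using Fekete--Schoenberg at each stage, so that the inductive hypothesis gives positivity of all minors of $A$ of size $<r$, not merely the contiguous ones. Beyond this bookkeeping, the argument is driven entirely by the alien cofactor identity, which is why a single well-chosen test vector suffices and why no auxiliary hypothesis such as condition (i) of Theorem \ref{tp-sign-rev_k_last} is needed.
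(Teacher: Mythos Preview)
Your proof is correct and essentially matches the paper's argument: both use the cofactor identity $A_r x^{A_r} = (\det A_r)\,e^1$, prove $(ii)\Rightarrow(i)$ by induction on $r$ with Fekete--Schoenberg (upgrading contiguous minors to all minors so that the non-contiguous cofactors $A^{1j}_r$ are covered), and conclude that sign non-reversal can only be witnessed at $i=1$, forcing $\det A_r>0$. The only minor difference is that for $(i)\Rightarrow(ii)$ the paper cites Theorem~\ref{tp-sign-rev_k}(iii), whereas you verify the sign non-reversal directly at coordinate $1$ using the same identity---a slightly more self-contained route, but not a substantively different one.
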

\begin{proof}
	That $(i) \implies (ii)$ is immediate from Theorem \ref{tp-sign-rev_k} (iii).
	
	We will now prove $(ii) \implies (i)$ using induction on the size of the contiguous minors of $A$ (by the Fekete--Schoenberg Theorem \ref{fec}). The base case $r=1$ directly follows from the hypothesis in $(ii)$. For the induction step, assume that all contiguous minors of $A$ of size at most $r-1$ (where $2\leq r\leq k)$ are positive. Let $A_r$ be an $r\times r$ contiguous submatrix of $A$. By the induction hypothesis, all the proper contiguous minors of $A_r$ are positive. Thus all the proper minors of $A_r$ are positive by the Fekete--Schoenberg Theorem
	\ref{fec}.

	Define the vector $x^{A_r}$ as in \eqref{Ezb}. Then $x^{A_r}\in \altr{r}$, and 
	\[
	A_rx^{A_r}=(\det{A_r})e^1.
	\]
	
	Now we claim that $\det{A_r}>0$. By hypothesis, there exists $i \in [r]$ such that
	\begin{equation}
	0< x^{A_r}_i(A_r x^{A_r})_i=(\det{A_r}) x^{A_r}_i e^1_i. \nonumber
	\end{equation}
	Thus $i=1$ and $\det{A_r}>0$.
\end{proof}
\begin{rem}\label{snrtnrem}
	Note that the vector $x^{A_r}$ as defined in \eqref{Ezb} is the first column of $\adj(A_r)$. Instead of $\adj(A_r)e^1$ one can take $x^{A_r}=\adj(A_r)e^j$ for any $j \in[r]$, or even $x^{A_r}:=\adj(A_r)\alpha$, where $\alpha = (\alpha_1, -\alpha_2, \alpha_3, \dots, (-1)^{r-1} \alpha_r)^T$ is an arbitrary nonzero vector in the orthant where all $\alpha_i \geq 0$ (or $\leq 0$). Then Theorem \ref{TPsnr} still holds, with a similar proof.
\end{rem}

\begin{rem}
	Recently in \cite{CKK21}, we discussed a new characterization of totally non-negative matrices in terms of the non-strict sign non-reversal property. We proved that $ A \in \mathbb{R}^{m \times n}$ is totally non-negative  of order $k$ if and only if every submatrix $A_r$ of $A$ of size $r\in [k]$ has the non-strict sign reversal property with respect to a single vector of the form $z^{A_r}=(\det{A_r})\adj(A_r)\bd{r}$ as in \eqref{Ezbold}. It is easy to verify that the result is still true if we take any non-negative integer power of $\det{A_r}$. More generally, the result holds if we take $z^{A_r}:=\adj(A_r)\alpha$ for arbitrary fixed $\alpha \in \altr{r}$.
\end{rem}

\section{Theorem \ref{TP-vandimnew}: Variation diminution and total positivity}\label{vdsec}
A very important and widely used characterization of totally positive and totally non-negative matrices is in terms of their variation diminishing property. The term “variation diminishing” was coined by P\'olya in correspondence with Fekete in 1912 \cite{FP12} to prove the following result (stated by Laguerre \cite{Laguerre}) using  P\'olya frequency sequences and their
variation diminishing property: \textit{given a polynomial $f(x)$ 
	and an integer $s \geq 0$, the number $var(e^{sx} f(x))$ of variations in the
	Maclaurin coefficients of $e^{sx} f(x)$ is non-increasing in $s$, hence
	is bounded above by $var(f) < \infty$.} The variation diminishing property of totally non-negative matrices was first studied by Schoenberg \cite{S30} in 1930. In 1950~\cite{GK50}, Gantmacher--Krein
made fundamental contributions relating total positivity and variation diminution. To proceed, we need some notation.
\begin{defn}
	Given a vector $x \in \mathbb{R}^n$, let $S^-(x)$ denote the number of changes in sign after deleting all zero entries in $x$. Next, the zero entries of $x$ are arbitrarily assigned a value of $\pm 1$, and we denote by $S^+(x)$ the maximum possible number of sign changes in the resulting sequence. For $0\in \mathbb{R}^n$, we set $S^+(0):=n$ and $S^-(0):=0$.
\end{defn}
The following result of Brown--Johnstone--MacGibbon \cite{BJM81} (see also Gantmacher--Krein \cite[Chapter V]{GK50}) gives a characterization of totally positive matrices in terms of the variation diminishing property (cited from Pinkus's book).
\begin{theorem}[{\cite[Theorem 3.3]{pinkus}}]\label{TP-vandim}
	Given a real $m \times n$ matrix $A$, the following statements are
	equivalent.
	\begin{enumerate}
		\item $A$ is totally positive.
		\item For all $0\neq x \in \mathbb{R}^n$, $S^+(Ax) \leq S^-(x)$. If
		moreover equality occurs and $Ax \neq 0$, the first (last) component of $Ax$ (if zero, the unique sign required to determine $S^+(Ax)$) has the same sign as the first (last)
		nonzero component of $x$. 
	\end{enumerate}
\end{theorem}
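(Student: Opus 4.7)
The plan is to prove the two implications separately. For $(i) \Rightarrow (ii)$ I would use a Cauchy--Binet argument on an augmented matrix; for $(ii) \Rightarrow (i)$ I would argue by contrapositive using the sign non-reversal characterization of total positivity (Theorem \ref{tp-sign-rev_k}).

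\textbf{Forward direction.} Fix $x \neq 0$, set $y := Ax$ and $p := S^-(x)$, and suppose for contradiction that $S^+(y) \geq p+1$. Pick row indices $i_1 < \cdots < i_{p+2}$ together with a sign assignment to the zero entries among $(y_{i_1}, \ldots, y_{i_{p+2}})$ producing a strictly alternating pattern. Since $S^-(x) = p$, choose column indices $j_1 < \cdots < j_{p+1}$ by picking one representative from each of the $p+1$ maximal sign-constant blocks of $\mathrm{supp}(x)$, so that $\sgn(x_{j_s})$ alternates in $s$. Form the $(p+2) \times (p+2)$ matrix $M$ whose first $p+1$ columns are the submatrix of $A$ on rows $i_1,\ldots,i_{p+2}$ and columns $j_1,\ldots,j_{p+1}$, and whose last column is $(y_{i_1},\ldots,y_{i_{p+2}})^T$. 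Expanding $\det M$ along the last column yields an alternating sum of positive $(p+1)$-minors of $A$ (by TP) weighted by the strictly alternating $y_{i_j}$, hence a nonzero quantity of definite sign. Expanding the last column instead by multilinearity via $y_{i_j} = \sum_k a_{i_j, k} x_k$, the same $\det M$ equals $\sum_{k \notin \{j_1,\ldots,j_{p+1}\}} \pm x_k M_k$ with each $M_k > 0$ a $(p+2)$-minor of $A$; the Laplace sign rule for column insertion combined with the alternation of $\sgn(x_{j_s})$ forces this second expression to have the opposite sign --- contradiction. The equality/endpoint clause of (ii) is read off from the boundary of this argument: if $S^+(y) = p$ with $y \neq 0$, then the first (respectively last) nonzero entry of $y$ must share the sign of the first (respectively last) nonzero entry of $x$, else a variant of the above setup with $p+1$ rows produces a forbidden minor identity.

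\textbf{Reverse direction and main obstacle.} Assume (ii) and suppose, for contradiction, that $A$ is not TP$_k$. By Theorem \ref{tp-sign-rev_k} there exists $r \in [k]$, a contiguous $r \times r$ submatrix $A_r$ on rows $I = \{i_1 < \cdots < i_r\}$ and columns $J$, and a vector $x \in \altr{r}$ such that $A_r$ reverses the sign of $x$, i.e., $x_t (A_r x)_t \leq 0$ for every $t \in [r]$. Extend $x$ to $\tilde{x} \in \mathbb{R}^n$ by zeros outside $J$, so $S^-(\tilde{x}) = r-1$. Restricted to rows $I$, the image $A \tilde{x}$ agrees with $A_r x$, whose nonzero entries carry signs opposite to those of $x$. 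A suitable sign assignment to the zero entries of $(A\tilde{x})_I$ then realizes $S^+((A\tilde{x})_I) \geq r-1$, which by monotonicity of $S^+$ gives $S^+(A\tilde{x}) \geq r-1 = S^-(\tilde{x})$; variation diminishing forces equality, invoking the endpoint sign clause. The first nonzero of $\tilde{x}$ carries sign $\sgn(x_1)$, while the reversed pattern on $(A\tilde{x})_I$ forces the first nonzero of $A\tilde{x}$ to carry the opposite sign, yielding the contradiction.

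The main obstacle I anticipate is that the endpoint clause of (ii) concerns row $1$ of $A$, not row $i_1$ of $A_r$; to force the first nonzero entry of $A\tilde{x}$ to actually lie within $I$ (and carry the wrong sign) one must control the rows of $A$ above $I$. I would handle this by an auxiliary induction on $i_1$, using already-established TP$_r$ on the strip of rows preceding $I$ so that no spurious nonzero entries of $A\tilde{x}$ appear above row $i_1$. A secondary difficulty is the sign bookkeeping in the forward direction --- specifically handling zero entries of $y$ whose assigned signs must remain consistent across the two expansions of $\det M$ --- which may require a small perturbation reducing to strictly alternating $y$.
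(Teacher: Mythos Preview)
First, note that the paper does not itself prove Theorem~\ref{TP-vandim}: it is quoted from \cite{pinkus} and used as a black box in Theorem~\ref{TP-vandimnew}. That said, the paper does carry out essentially the classical forward argument in the proof of Theorem~\ref{thrmlast}(1), and it proves the reverse implication inside Theorem~\ref{TP-vandimnew} via $(ii)\Rightarrow(iii)\Rightarrow(i)$, so there is meaningful content to compare against.

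Your forward direction contains a genuine gap. After choosing one representative $j_s$ per sign block and expanding the last column of $M$ by multilinearity as $y_I=\sum_k x_k\,a^k_I$, the surviving terms $x_k\det[a^{j_1}_I|\cdots|a^{j_{p+1}}_I|a^k_I]$ do \emph{not} all carry the same sign. If $j_s<k<j_{s+1}$ then the column-permutation sign is $(-1)^{p+1-s}$, while $\sgn(x_k)=(-1)^{t-1}\epsilon$ with $t\in\{s,s+1\}$ the block containing $k$; the term therefore has sign $(-1)^p\epsilon$ or $(-1)^{p+1}\epsilon$ according as $k$ lies in block $s$ or block $s+1$. Your ``opposite sign'' conclusion does not follow, and no choice of representatives repairs it. The classical remedy --- exactly what the paper does in proving Theorem~\ref{thrmlast}(1) --- is to \emph{contract} each block into a single column $b^s:=\sum_{j\in\text{block }s}|x_j|\,a^j$. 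The resulting $B\in\mathbb{R}^{m\times(p+1)}$ is $TP$ and satisfies $y=B\bd{p+1}$, so $\det[\,y_I\,|\,B_{I\times[p+1]}\,]=0$ tautologically; a single Laplace expansion along the $y$-column then yields both the inequality $S^+(y)\le S^-(x)$ and the endpoint clause, with no need to balance two expansions against each other.

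For the reverse direction, the obstacle you flag is real and your induction-on-$i_1$ patch is too vague: it would require total positivity on the rows above $I$, which is precisely what you are trying to establish. The paper's route avoids this entirely. It first shows (Theorem~\ref{TP-vandimnew}, $(ii)\Rightarrow(iii)$) that hypothesis~(ii) on $A$ descends to each contiguous $A_r$: extend the test vector by zeros, use $S^+((A\tilde{x})_I)\le S^+(A\tilde{x})$, and observe that when equality holds the $S^-(\tilde{x})$ sign changes of $A\tilde{x}$ are all forced to occur inside $I$, so the endpoint signs transfer from $A\tilde{x}$ to $A_rx^{A_r}$. Then, working \emph{intrinsically} in $A_r$, it applies the hypothesis not to a generic sign-reversed vector coming from Theorem~\ref{tp-sign-rev_k} but to the specific vector $x^{A_r}=\adj(A_r)e^1$, for which $A_rx^{A_r}=(\det A_r)e^1$; the endpoint clause reads off $\det A_r>0$ immediately, and Fekete--Schoenberg (Theorem~\ref{fec}) closes the induction. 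Routing the argument through sign non-reversal creates the row-localization problem rather than solving it.
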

We next recall the analogous characterization for totally non-negative matrices using the variation diminishing property.
\begin{theorem}[{\cite[Theorem 3.4]{pinkus}}]\label{TN-vandim}
	Given a real $m \times n$ matrix $A$, the following statements are
	equivalent.
	\begin{enumerate}
		\item $A$ is totally non-negative.
		\item For all $x \in \mathbb{R}^n$, $S^-(Ax) \leq S^-(x)$. If
		moreover equality occurs and $Ax \neq 0$, the first (last)
		nonzero component of $Ax$ has the same sign as the first (last)
		nonzero component of $x$.
		
	\end{enumerate}
\end{theorem}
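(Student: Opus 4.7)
The plan is to prove the two directions separately: the forward direction $(i) \Rightarrow (ii)$ by approximating $A$ by a sequence of totally positive matrices and appealing to Theorem~\ref{TP-vandim}, and the reverse $(ii) \Rightarrow (i)$ by induction on minor size using carefully chosen test vectors.

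For $(i) \Rightarrow (ii)$, I would first invoke a standard approximation result: every $TN$ matrix $A \in \mathbb{R}^{m\times n}$ is the coordinate-wise limit of a sequence of totally positive matrices $A_r \to A$ (for instance, $A_r := A + \varepsilon_r B_0$ where $B_0$ is a fixed $TP$ matrix of the same size and $\varepsilon_r \downarrow 0$). Fixing $x \in \mathbb{R}^n$, Theorem~\ref{TP-vandim} yields $S^+(A_r x) \leq S^-(x)$ for each $r$, and in particular $S^-(A_r x) \leq S^-(x)$. Since $S^-$ is lower semi-continuous on $\mathbb{R}^m$ (an alternating sign pattern on a fixed set of nonzero coordinates of the limit remains alternating on the same coordinates for $r$ sufficiently large), passing to the limit gives $S^-(Ax) \leq \liminf_r S^-(A_r x) \leq S^-(x)$. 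For the equality clause of $(ii)$, one traces the first (and last) nonzero coordinate of $Ax$: for $r$ large it remains nonzero and of the same sign in $A_r x$, and applying the equality clause of Theorem~\ref{TP-vandim} to $A_r$ matches this sign with the first (respectively last) nonzero component of $x$.

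For $(ii) \Rightarrow (i)$, I would proceed by induction on $k$, showing that every $k \times k$ minor of $A$ is non-negative. The base case $k=1$ is immediate: applying the variation diminishing hypothesis at $x = e^j$ gives $S^-(x) = 0$, hence $S^-(Ax) = 0$, and the boundary clause (since $x_j = 1 > 0$) forces every entry of column $j$ of $A$ to be $\geq 0$. For the inductive step, assume every minor of size $\leq k-1$ is non-negative and fix a $k \times k$ submatrix $B$ with rows $i_1 < \cdots < i_k$ and columns $j_1 < \cdots < j_k$. Construct $x \in \mathbb{R}^n$ supported on $\{j_1, \ldots, j_k\}$ with $x_{j_l} := (-1)^{l-1} B^{1l}$; by the inductive hypothesis each $B^{1l} \geq 0$, so $S^-(x) \leq k-1$. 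A Laplace expansion along the first row of $B$ gives $(Ax)_{i_1} = \det B$ and $(Ax)_{i_p} = 0$ for $p = 2, \ldots, k$. Combining the variation diminishing hypothesis with the boundary clause then constrains $\det B \geq 0$.

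The main obstacle is the sign analysis completing the inductive step. While $Ax$ is pinned on rows in $\{i_1, \ldots, i_k\}$, the values $(Ax)_i$ for $i$ outside this set are themselves signed $k \times k$ minors of $A$, not yet known to be non-negative, and so cannot be controlled \emph{a priori}. Completing the argument requires exploiting the full strength of the boundary clause (which specifically constrains the first and last nonzero positions of $Ax$), together with iterating the construction using cofactors along different rows of $B$ (which isolates $\det B$ at positions other than $i_1$), and combining the resulting sign constraints to rule out $\det B < 0$.
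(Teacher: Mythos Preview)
The paper does not prove Theorem~\ref{TN-vandim}; it is quoted as a classical result from Pinkus \cite[Theorem~3.4]{pinkus} and invoked without proof (for instance as the implication $(i)\Rightarrow(ii)$ in the proof of Theorem~\ref{TN-vandimnew}). There is therefore no proof in the paper to compare your proposal against.

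On the proposal itself, two remarks. First, in $(i)\Rightarrow(ii)$ your approximation strategy is the standard one and is essentially correct, but the parenthetical ``for instance, $A_r := A + \varepsilon_r B_0$'' is wrong: with $A=\bigl(\begin{smallmatrix}0&1\\0&0\end{smallmatrix}\bigr)$ and any $2\times2$ $TP$ matrix $B_0$, the determinant of $A+\varepsilon B_0$ is negative for small $\varepsilon>0$. Whitney's Theorem~\ref{Twhitney} does furnish the required $TP$ sequence, just not by that formula. Second, your $(ii)\Rightarrow(i)$ is, as you acknowledge, incomplete: the entries of $Ax$ at rows outside $\{i_1,\dots,i_k\}$ are themselves signed $k\times k$ minors not yet controlled by the induction hypothesis, and ``iterating over other rows of $B$'' only relocates $\det B$ among the rows $i_p$ without taming those outside entries. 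Note that the paper's own strengthening, Theorem~\ref{TN-vandimnew}, sidesteps precisely this obstacle by imposing the variation-diminishing hypothesis on each square \emph{submatrix} $A_r$ separately, so that $A_r y^{A_r}\in\mathbb{R}^r$ has no extraneous coordinates; that device is unavailable under hypothesis~(ii) of Theorem~\ref{TN-vandim}, which speaks only of the full matrix $A$.
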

Observe that the set of test vectors in the second statements of Theorems \ref{TP-vandim} and \ref{TN-vandim} is uncountable. It is natural to ask if this can be reduced to a finite set of test vectors? Our next result provides a positive answer -- in fact, a \textit{single} vector for each submatrix.

\begin{utheorem}\label{TP-vandimnew}
	Given a real $m \times n$ matrix $A$, the following statements are
	equivalent.
	\begin{enumerate}
		\item $A$ is totally positive.
		\item For all $0 \neq x \in \mathbb{R}^n$, $S^+(Ax) \leq S^-(x)$. If
		moreover equality occurs and $Ax \neq 0$, the first (last) component of $Ax$ (if zero, then the unique sign required to determine $S^+(Ax)$) has the same sign as the first (last) nonzero component of $x$. 
		\item For every $r \in[\min\{m,n\}]$ and contiguous $r
		\times r$ submatrix $A_r$ of $A$, define the vector
		\begin{equation}\label{vdeq1}
		x^{A_r}:= (A^{11}_r,-A^{12}_r,\ldots ,(-1)^{r-1}A^{1 r}_r)^T.
		\end{equation}
		Then $S^+(A_rx^{A_r}) \leq S^-(x^{A_r})$. If
		equality holds here, then the first (last)
		component of $A_rx^{A_r}$ (if zero, the unique sign required to determine $S^+(A_rx^{A_r})$) has the same sign as the first (last)
		nonzero component of $x^{A_r}$. 
	\end{enumerate}
\end{utheorem}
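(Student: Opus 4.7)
The plan is to observe that (1)$\Leftrightarrow$(2) is the classical Brown--Johnstone--MacGibbon characterization (Theorem \ref{TP-vandim}), and that (1)$\Rightarrow$(3) is immediate: if $A$ is $TP$, then each contiguous submatrix $A_r$ is $TP$, so Theorem \ref{TP-vandim} applied to $A_r$ with the specific nonzero test vector $x^{A_r}$ yields both the inequality and the equality-case sign condition. The substantive content is therefore (3)$\Rightarrow$(1), which I would establish by induction on the size $r$ of contiguous square submatrices of $A$, concluding via the Fekete--Schoenberg Theorem \ref{fec}.

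For the base case $r=1$, fix a $1\times 1$ contiguous submatrix $A_1=(a_{ij})$. Then $x^{A_1}=(1)$ and $S^-(x^{A_1})=0$. If $a_{ij}=0$ then $A_1 x^{A_1}=0$ and $S^+(0)=1>0$, contradicting (3); hence $a_{ij}\neq 0$, and the equality case of (3) forces the sign of $a_{ij}$ to agree with the sign of the unique nonzero entry of $x^{A_1}$, namely $+1$. So every entry of $A$ is positive.

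For the inductive step, assume every contiguous minor of $A$ of size at most $r-1$ is positive ($2\leq r\leq \min\{m,n\}$), and let $A_r$ be an $r\times r$ contiguous submatrix. By the inductive hypothesis and Theorem \ref{fec} applied to $A_r$, the matrix $A_r$ is $TP_{r-1}$; in particular $A_r^{1j}>0$ for all $j\in[r]$. Consequently $x^{A_r}=(A_r^{11},-A_r^{12},\ldots,(-1)^{r-1}A_r^{1r})^T$ is strictly sign-alternating with no zero entries, giving $S^-(x^{A_r})=r-1$. Recognizing $x^{A_r}$ as $\adj(A_r)\,e^1$, cofactor expansion along the first row of $A_r$ yields the key identity
\[
A_r\, x^{A_r}=(\det A_r)\, e^1.
\]
I would then rule out $\det A_r\leq 0$ using (3). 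If $\det A_r=0$, then $A_rx^{A_r}=0$ and $S^+(0)=r>r-1=S^-(x^{A_r})$, contradicting the inequality in (3). If $\det A_r<0$, then $A_rx^{A_r}$ has negative first coordinate and vanishes elsewhere, so the maximal sign assignment to the zero coordinates yields $S^+(A_rx^{A_r})=r-1=S^-(x^{A_r})$; but now equality holds, and the first coordinate of $A_rx^{A_r}$ is negative while the first nonzero coordinate of $x^{A_r}$ is $A_r^{11}>0$, contradicting the sign condition in (3). Hence $\det A_r>0$, closing the induction.

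The only real subtlety is the careful accounting of $S^+$ and $S^-$ at vectors with many zero coordinates: the convention $S^+(0)=n$ together with the maximal sign-assignment rule eliminates $\det A_r=0$ via the inequality alone, whereas the "moreover" clause is required to eliminate $\det A_r<0$ via the equality case. The argument is entirely parallel in spirit to the proof of Theorem \ref{TPsnr} above, with the variation diminishing test at the single vector $x^{A_r}$ playing the role of the sign non-reversal test at the same vector.
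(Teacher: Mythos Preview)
Your proof is correct and takes essentially the same approach as the paper: the substantive implication $(3)\Rightarrow(1)$ is argued identically via Fekete--Schoenberg and induction on $r$, using $A_r x^{A_r}=(\det A_r)e^1$ together with the $S^+$ convention to rule out $\det A_r=0$ and the equality-case sign clause to rule out $\det A_r<0$. The only minor difference is that you obtain $(1)\Rightarrow(3)$ directly by applying Theorem~\ref{TP-vandim} to each $TP$ submatrix $A_r$, whereas the paper proves the chain $(1)\Rightarrow(2)\Rightarrow(3)$, handling $(2)\Rightarrow(3)$ by zero-padding $x^{A_r}$ to a vector in $\mathbb{R}^n$; your shortcut is slightly simpler, while the paper's route has the small advantage of establishing $(2)\Rightarrow(3)$ as a direct implication.
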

\begin{proof}
	That $(i)\implies (ii)$ was Theorem \ref{TP-vandim}. We first show $(ii) \implies (iii)$. Let $k=\min\{m,n\}$, fix $r \in [k]$ and let $A_r$ be a contiguous submatrix of $A$, say $A_r=A_{I \times J}$ for contiguous sets of indices $I \subseteq [m]$ and $J \subseteq [n]$ with $\lvert I\rvert = \lvert J \rvert =r$. Define $x^{A_r}$ as in \eqref{vdeq1}; note that this is nonzero. We extend $x^{A_r}$ to $x\in \mathbb{R}^n$ by embedding in positions $J$ and padding by zeroes elsewhere. Then 
	\begin{equation} S^-(x)=S^-(x^{A_r}) \hbox{~~and~~} S^+(Ax)\geq S^+(A_rx^{A_r}).\label{vdeq2}
	\end{equation} Thus  $S^+(A_rx^{A_r}) \leq S^-(x^{A_r})$, since $S^+(Ax)\leq S^-(x)$.
	
	Suppose that $S^+(A_rx^{A_r}) = S^-(x^{A_r})$. Then $S^+(Ax)=S^-(x)$. Without loss of generality, assume that  the first and last nonzero entries of $x^{A_r}$ are in positions $s,t\in [r]$, respectively. Enumerate the indices in $i \in [r]$ by $i_1<i_2<\cdots <i_r$. By the hypothesis, it follows that all coordinates of $Ax$ in positions $1,2,\ldots, i_s ~(\hbox{respectively}~i_t,\ldots, m)$ have the same sign and this sign agrees with that of $x^{A_r}_s ~(\hbox{respectively}~x^{A_r}_t)$. This concludes  $(ii) \implies (iii)$.
	
	To show $(iii) \implies (i)$, by Theorem \ref{fec}, it suffices to show that the determinants of  all $r \times r$ contiguous submatrices are positive, for $1\leq r \leq \min\{m,n\}$. We prove this by induction on $r$. The case $r=1$ is immediate from $(iii)$. For the induction step, suppose that all contiguous minors of $A$ of size at most $(r-1)$ are positive and $A_r$ is an $r \times r$ contiguous submatrix of $A$. By the Fekete--Schoenberg Theorem \ref{fec}, $A_r$ is $TP_{r-1}$. Define the vector $x^{A_r}$ as in \eqref{vdeq1}. Then $x^{A_r}\in \altr{r}$ and $S^-(x^{A_r})=r-1$.
	
	We first show that $A_r$ is invertible. Indeed suppose that $A_r$ is singular. Then $A_r x^{A_r}=0$ and $r=S^+(A_rx^{A_r})> S^-(x^{A_r})$, a contradiction. Thus $A_r$ is invertible. 
	
	Next we show that $\det {A_r}>0$. Since $A_r x^{A_r}=(\det{A_r})e^1$, we have
	\[r-1=S^+(A_rx^{A_r})= S^-(x^{A_r}).
	\]
	Thus by $(iii)$, the first component of $A_rx^{A_r}$ has the same sign as the first nonzero component of $x^{A_r}$. Hence $\det{A_r}>0$ and the induction step is complete.
\end{proof}
\begin{rem}
	Remark \ref{snrtnrem} applies verbatim to Theorem \ref{TP-vandimnew}.
\end{rem}
We conclude this section with a similar improvement to the classical characterization of $TN$ matrices via variation diminution. \begin{theorem}\label{TN-vandimnew}
	Given a real $m \times n$ matrix $A$, the following statements are
	equivalent.
	\begin{enumerate}
		\item $A$ is totally non-negative.
		\item For all $x \in \mathbb{R}^n$, $S^-(Ax) \leq S^-(x)$. If
		moreover equality occurs and $Ax \neq 0$, then the first (last)
		nonzero component of $Ax$ has the same sign as the first (last)
		nonzero component of $x$. 
		\item For every square submatrix $A_r$ of $A$ of size $r \in [\min\{m,n\}]$ define the vector
		\begin{equation}\label{vdeqtn1}
		y^{A_r}:= \adj(A_r)\alpha, \hbox{~~for arbitrary fixed}~ \alpha \in \altr{r}.
		\end{equation}
		Then $S^-(A_ry^{A_r}) \leq S^-(y^{A_r})$. If equality holds here and $A_ry^{A_r} \neq 0$, the first (last)
		nonzero component of $A_ry^{A_r}$ has the same sign as the first (last)
		nonzero component of $y^{A_r}$.
	\end{enumerate}
	\begin{proof}
		That $(i)\implies (ii)$ was Theorem \ref{TN-vandim}. To show $(ii)\implies (iii)$, repeat the proof of Theorem \ref{TP-vandimnew}, but working with arbitrary $r \times r$ submatrices $A_r$ of $A$, where $r \in [\min\{m,n\}]$ and the vector $y^{A_r}$ from \eqref{vdeqtn1} is used in place of $x^{A_r}$.
		
		Next we show $(iii) \implies (i)$. Let $k=\min\{m,n\}$ and $r \in [k]$. We show that $\det{A_r}\geq 0$ for all square submatrices $A_r$ of $A$ of size $r$. We prove this by induction on $r$, with the base case $r=1$ immediate. Now suppose all minors of $A$ of size at most $(r-1)$ are non-negative and $A_r$ is an $r \times r$ submatrix of $A$. If $\det{A_r}=0$ then we are done. Let $\det{A_r} \neq 0$, $\alpha=(\alpha_1,\alpha_2\ldots, \alpha_r)^T \in \altr{r}$ and define $y^{A_r}$ as in \eqref{vdeqtn1}. Then no row of $\adj(A_r)$ is zero and $y^{A_r}\in \altr{r}$. Since $A_ry^{A_r}=(\det{A_r}) \alpha$, we have 
		\[
		S^-(A_ry^{A_r})=S^-(y^{A_r})=r-1.
		\]
		Also, the first entry of $A_ry^{A_r}$ is $\alpha_1\det{A_r}$ and the first entry of $y^{A_r}$ is
		\[
		\sum_{j=1}^r (-1)^{j-1}\alpha_j  {A_r}^{j 1},
		\]
		where the summation is positive  (or negative)  if and only if $\alpha_1$ is positive (or negative) by the induction hypothesis. Thus $\det{A_r}>0$ by assumption.
	\end{proof}
\end{theorem}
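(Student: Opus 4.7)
The plan is to mirror the three-step strategy used for Theorem \ref{TP-vandimnew}. The implication $(i)\implies(ii)$ is exactly the classical Brown--Johnstone--MacGibbon / Gantmacher--Krein statement already recorded as Theorem \ref{TN-vandim}, so nothing new is needed there. The remaining task is to extract $(iii)$ from $(ii)$ and then bootstrap $(iii)$ back up to $(i)$.

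For $(ii)\implies(iii)$, I would fix a square submatrix $A_r=A_{I\times J}$ of $A$, a vector $\alpha\in\altr{r}$, and set $y^{A_r}:=\adj(A_r)\alpha$. Extend $y^{A_r}$ to $x\in\mathbb{R}^n$ by placing its coordinates in the columns indexed by $J$ and padding the remaining coordinates with zero. Since $S^-$ is insensitive to zero entries, $S^-(x)=S^-(y^{A_r})$; and since $A_ry^{A_r}=(Ax)_I$ is a subvector of $Ax$, the subsequence principle gives $S^-(A_ry^{A_r})\leq S^-(Ax)$. Combined with $(ii)$ this yields the required inequality $S^-(A_ry^{A_r})\leq S^-(y^{A_r})$. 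In the equality case with $A_ry^{A_r}\neq 0$, the quantities $S^-(A_ry^{A_r})$, $S^-(Ax)$, and $S^-(x)$ must all coincide; hence nonzero entries of $Ax$ at positions outside $I$ cannot introduce any additional sign alternations, so the first (and last) nonzero entry of $Ax$ must share its sign with the first (and last) nonzero entry of the subvector $A_ry^{A_r}$. Combined with the first/last-sign clause of $(ii)$, this transfers the sign information from $x$ down to $y^{A_r}$.

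For $(iii)\implies(i)$, I would induct on $r\in[\min\{m,n\}]$ to show that every $r\times r$ minor of $A$ is non-negative. The base $r=1$ is immediate from $(iii)$ applied to $1\times 1$ submatrices. For the inductive step, the hypothesis makes every proper minor of $A_r$ non-negative, so only $\det A_r\geq 0$ needs to be proved, and the case $\det A_r=0$ is trivial. Assume therefore $\det A_r\neq 0$, so $\adj(A_r)$ is invertible and in particular has no zero row. Pick $\alpha\in\altr{r}$ with $\alpha_1>0$ and set $y^{A_r}:=\adj(A_r)\alpha$. A sign bookkeeping using $\alpha_j=(-1)^{j-1}|\alpha_j|$ and $\adj(A_r)_{ij}=(-1)^{i+j}A_r^{ji}$ gives $y^{A_r}_i=(-1)^{i-1}\sum_j|\alpha_j|A_r^{ji}$, and the induction hypothesis together with the no-zero-row property of $\adj(A_r)$ makes this sum strictly positive; hence $y^{A_r}\in\altr{r}$ with positive first coordinate, and so $S^-(y^{A_r})=r-1$. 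Since $A_ry^{A_r}=(\det A_r)\alpha\in\altr{r}$, also $S^-(A_ry^{A_r})=r-1$. The equality clause of $(iii)$ then forces $(\det A_r)\alpha_1$ to share its sign with the positive first entry of $y^{A_r}$, yielding $\det A_r>0$.

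The main obstacle I anticipate lies in the leading-sign transfer in $(ii)\implies(iii)$: the subvector inequality for $S^-$ is a combinatorial triviality, but carrying the first/last-nonzero-sign condition from $x$ through $Ax$ down to its subvector $A_ry^{A_r}$ requires arguing that, in the equality case, the nonzero coordinates of $Ax$ outside $I$ cannot ``flip'' the relevant sign near either end. A secondary delicate point is verifying $\adj(A_r)\alpha\in\altr{r}$ in the induction of $(iii)\implies(i)$, where non-negativity of the proper minors alone would not suffice without the no-zero-row property granted by invertibility of $A_r$.
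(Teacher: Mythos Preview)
Your proposal is correct and follows essentially the same route as the paper: $(i)\Rightarrow(ii)$ via Theorem~\ref{TN-vandim}, $(ii)\Rightarrow(iii)$ by the extend-by-zeros / restrict-to-subvector argument (exactly what the paper means by ``repeat the proof of Theorem~\ref{TP-vandimnew}''), and $(iii)\Rightarrow(i)$ by induction on $r$ using $A_r y^{A_r}=(\det A_r)\alpha$ and the sign analysis of the first coordinate of $\adj(A_r)\alpha$. The only cosmetic difference is that you ``pick $\alpha$ with $\alpha_1>0$'' whereas the paper treats both signs of $\alpha_1$; since $\alpha$ is given in hypothesis~(iii) rather than chosen, you should phrase this as a WLOG (replacing $\alpha$ by $-\alpha$ leaves the hypothesis invariant), but the argument is otherwise identical.
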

\section{Test vectors from any other orthant do not work}

In the previous two sections, we have seen that $TP_k$ matrices are characterized by their contiguous square submatrices $A_{r}$ ($r \in [k]$) satisfying either the variation diminishing property, or the sign non-reversal property, on the entire open bi-orthant $\altr{r}$ for each $r \in [k]$ -- or on single test vectors which turn out to lie in this bi-orthant. We conclude by explaining the sense in which these results are `best possible'. Informally, we claim that if $x \in \mathbb{R}^r$ lies in any other open orthant (i.e., all $x_j \neq 0$ and there are two successive $x_j$ of the same sign), then \textit{every} $TP_{r-1}$ matrix $A_{r}$ of size $r$ satisfies the variation diminishing property and the sign non-reversal property with respect to $x$. (In particular, since there exist $TP_{r-1}$ matrices $A_{r}$ that are not $TP$, the aforementioned characterizations cannot hold with test vectors in any open bi-orthant other than in $\altr{r}$.)

\begin{theorem}\label{thrmlast}
	Suppose $x \in \mathbb{R}^r$ has nonzero coordinates, and at least two successive coordinates have common sign. Let $A_r \in \mathbb{R}^{r \times r}$ be $TP_{r-1}$. Then:
	\begin{enumerate}
		\item[(1)] $A_r$ satisfies the variation diminishing property with respect to $x$. In other words, $S^+(A_rx) \leq S^-(x)$. If
		moreover equality holds, then the first (last) component of $Ax$ (if zero, the unique sign required to determine $S^+(Ax)$) has the same sign as the first (last)
		component of $x$. 
		\item[(2)] $A_r$ satisfies the sign non-reversal property with respect to $x$. In other words, there exists a coordinate $i \in [r]$ such that $x_i(A_rx)_i>0$.
	\end{enumerate}
\end{theorem}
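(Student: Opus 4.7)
The plan is to prove Part~(1) first and then deduce Part~(2) from it via the equality-case sign conditions. Two preliminary observations: (a)~$S^-(x) \leq r - 2$, because $x$ has two successive coordinates of common sign; and (b)~$A_r x \neq 0$, since $A_r \adj(A_r) = (\det A_r) I$ and every entry of $\adj(A_r)$ equals $\pm$ a positive $(r-1)$-minor of $A_r$, so $\ker A_r$ is either trivial (when $\det A_r \neq 0$) or one-dimensional and spanned by a vector in $\altr{r}$---neither compatible with $x \notin \altr{r}$.

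The workhorse is a rank-one perturbation lemma: for each $t \geq 0$, the matrix $B_t := A_r + t e^1 (e^1)^T$ satisfies $\det B_t[I|J] = \det A_r[I|J] + t \det A_r[I\setminus\{1\} \mid J\setminus\{1\}]$ whenever $1 \in I \cap J$, and $\det B_t[I|J] = \det A_r[I|J]$ otherwise. Every proper minor of $A_r$ is positive, so the only minor of $B_t$ whose positivity is not automatic is $\det B_t = \det A_r + t A_r^{11}$ with $A_r^{11} > 0$; hence $B_t$ is $TP$ for all $t$ sufficiently large. The analogous statement holds for $C_t := A_r + t e^r (e^r)^T$.

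For Part~(1), split on $p := S^-(x)$. If $p \leq r-3$ and $S^+(A_r x) \geq p+1$, then there is a subset $I \subseteq [r]$ of size $p+2 \leq r-1$ on which $A_r x$ admits an alternating sign assignment; the $(p+2) \times r$ submatrix $A_r[I|\cdot]$ has all minors of order at most $r-1$ and is therefore $TP$, and Theorem~\ref{TP-vandim} yields the contradiction $S^+(A_r[I|\cdot] x) \leq S^-(x) = p$. The subtle case is $p = r-2$, since then the analogous submatrix collapses to $A_r$ itself, which need not be $TP$, and $TP_{r-1}$ matrices with $\det A_r \leq 0$ are not limits of $TP$ matrices. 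Instead, assume $S^+(A_r x) = r-1$: then there exists $\sigma \in \{\pm 1\}$ with $\sgn((A_r x)_i) \in \{0, (-1)^{i-1}\sigma\}$ for every $i$. For $t$ large, $(B_t x)_1 = (A_r x)_1 + t x_1$ acquires the sign $\sgn(x_1)$ while $(B_t x)_i = (A_r x)_i$ for $i \geq 2$; if $\sgn(x_1) = \sigma$, then $B_t x$'s sign pattern is compatible with $(\sigma, -\sigma, \sigma, \ldots)$, so $S^+(B_t x) = r-1 > r-2 = S^-(x)$, contradicting Theorem~\ref{TP-vandim} for the $TP$ matrix $B_t$. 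Symmetrically, $C_t$ yields a contradiction if $\sgn(x_r) = (-1)^{r-1}\sigma$. The only remaining escape is $\sgn(x_1) = -\sigma$ and $\sgn(x_r) = (-1)^r \sigma$, but the combinatorial parity identity $\sgn(x_r) = (-1)^{r-2}\sgn(x_1) = (-1)^r \sgn(x_1)$ (for $x$ with exactly $r-2$ sign changes) then forces $(-1)^r \sigma = -(-1)^r \sigma$, i.e., $\sigma = 0$, a contradiction. The equality-case sign conditions follow by the same perturbations: $C_t$ leaves $(A_r x)_1$ unchanged, so the first-coordinate sign condition of Theorem~\ref{TP-vandim} applied to the $TP$ matrix $C_t$ and the vector $x$ transfers directly to $A_r x$, and $B_t$ handles the last coordinate analogously.

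For Part~(2), assume for contradiction that $x_i (A_r x)_i \leq 0$ for every $i$, so $\sgn((A_r x)_i) \in \{0, -\sgn(x_i)\}$. The sign assignment $\tau_i := -\sgn(x_i)$ at zero coordinates of $v := A_r x$ produces a sequence with $S^-(v^\tau) = S^-(-x) = S^-(x) =: p$, so $S^+(v) \geq p$; combined with Part~(1), $S^+(v) = p$, equality. The equality-case sign condition then forces the first coordinate of $v$---or the unique required sign at a leading zero---to equal $\sgn(x_1)$, whereas by construction it equals $-\sgn(x_1)$ (residual edge cases where a leading zero of $v$ is compatible with $\sgn(x_1)$ are ruled out by the analogous condition at the last coordinate, and if both ends escape then examining the zero-pattern of $v$ locally forces $S^+(v) > p$, recontradicting Part~(1)). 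The main obstacle in the plan is the $p = r-2$ case of Part~(1): the $TP$-submatrix reduction collapses and approximation by $TP$ matrices is unavailable; the rank-one perturbations $B_t, C_t$ and the combinatorial parity identity for $\sgn(x_r)$ provide the crucial workaround.
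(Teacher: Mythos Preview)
Your approach is genuinely different from the paper's. The paper groups the columns of $A_r$ according to the sign-blocks of $x$: writing $x$ as $k+1$ contiguous blocks of constant sign (so $k = S^-(x) \leq r-2$), it forms the $r\times(k+1)$ matrix $B$ whose $i$th column is the positive combination $\sum_{j \in \text{block }i} |x_j|\, a^j$. Since $k+1 \leq r-1$, every minor of $B$ is a positive combination of positive $(r-1)$-minors of $A_r$, so $B$ is $TP$, and $A_r x = B\bd{k+1}$. Both parts then reduce to properties of the genuinely $TP$ matrix $B$ via direct determinant expansions (for (1)) and Theorem~\ref{tp-sign-rev_k} (for (2)). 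This column-grouping handles all $p = S^-(x)$ uniformly and avoids any case split or perturbation.

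Your inequality $S^+(A_r x)\leq S^-(x)$ is correct: the row-submatrix reduction for $p\leq r-3$ is clean, and the rank-one perturbation plus parity argument for $p=r-2$ is a nice workaround. The gap is in your equality-case argument. To invoke the first-coordinate sign condition of Theorem~\ref{TP-vandim} for $C_t$, you need $S^+(C_t x)=S^-(x)=p$, and you have not established this. Changing only the last coordinate of $A_r x$ to one of sign $\sgn(x_r)$ can strictly lower $S^+$: concretely, if $S^+((A_r x)_{[r-1]})=p-1$ (so the last coordinate of $A_r x$ is essential to reaching $p$) and $\sgn(x_r)$ agrees with the terminal sign of an optimal assignment on the first $r-1$ coordinates, then $S^+(C_t x)=p-1$ and Theorem~\ref{TP-vandim} gives no sign information. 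The symmetric issue afflicts $B_t$ for the last-coordinate condition, and you have not ruled out both failing simultaneously. Your Part~(2) then inherits this gap, since it rests on the equality-case conclusion of Part~(1); the parenthetical about ``residual edge cases'' does not repair it. A fix in your framework is to delete a single row rather than perturb: since $p\leq r-2$, one can always choose $j\in[r]\setminus\{1\}$ outside an optimal $(p+1)$-index set containing $1$, so that $A_r[[r]\setminus\{j\}\mid\cdot]$ is $(r-1)\times r$ and $TP$, and $S^+$ of the restricted image is still $p$; then Theorem~\ref{TP-vandim} applies in its equality case with the first coordinate intact.
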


It remains to observe that there do exist matrices which are $TP_{n-1}$ but not $TP_n$, for each $n$. This follows from the analysis of a P\'olya frequency function studied by Karlin \cite{Karlin64} in 1964; this analysis was only recently carried out by Khare \cite{Khare20}. See Example \ref{examplelast} below, for details.

\begin{proof}[of Theorem \ref{thrmlast}]
	Let $x \in \mathbb{R}^r$ such that all the coordinates of $x$ are nonzero, and at least two successive coordinates have common sign. Decompose $x$ into contiguous coordinates of like signs:	
	\begin{equation}\label{partnlast}
	(x_1,\ldots ,x_{s_1}),~~(x_{s_1 +1},\ldots ,x_{s_2}), ~~ \ldots (x_{s_k +1},\ldots ,x_{r}),
	\end{equation}
	with all coordinates in the $i$th component having the same sign, which we choose to be $(-1)^{i-1}$ without loss of generality. Set $s_0=0$ and $s_{k+1}=r$ and observe that $k \leq r-2$.  Let $a^1,\ldots, a^r \in \mathbb{R}^r$ denote the columns of $A_r$, and define
	\[
	b^i:= \sum\limits_{j=s_{i-1}+1}^{s_i}\vert x_j \vert a^j,\quad \hbox{~for~} i\in [{k+1}].
	\]
	We claim that the matrix $B:=[b^1,\ldots, b^{k+1}] \in \mathbb{R}^{r \times (k+1)}$ is totally positive. Indeed, since all $x_j$ are nonzero, and all
	proper minors of $A_r$ are positive, given an integer $p \in [k+1]$ and
	$p$-element subsets $I \subset [r]$, $J\subseteq
	[k+1]$ using standard properties of
	determinants, we have
	\[
	\det B_{I\times J}= \sum_{l_1=s_{j_1-1}+1}^{s_{j_1}} \cdots
	\sum_{l_p=s_{j_p-1}+1}^{s_{j_p}}  \vert x_{l_1} \vert \ldots \vert
	x_{l_p} \vert \det {A_r}_{I\times L} > 0,
	\]
	where $L=\{l_1,\ldots l_p\}$, and $B_{I \times J}$ denotes the
	submatrix of $B$ whose rows and columns are indexed by $I,J$ respectively. Thus $B$ is $TP$; and we also define $y:=Ax=B\bd{k+1}$, where $\bd{k+1}:=(1,-1,1,\ldots (-1)^{k})^T\in \altr{k+1}$.
	
	With this analysis in hand, we can now prove the theorem.
	
	\noindent$\textbf{(1).}$ Note that $S^-(x)=k$. If $S^+(Ax)>S^-(x)$, then there exist indices $i_1<i_2<\cdots < i_{k+2}\in [r]$ and a sign $\epsilon=\pm 1$ such that $(-1)^{t-1}\epsilon y_{i_t}\geq 0$ for $t \in [{k+2}]$. Moreover at least two of the $y_{i_t}$ are nonzero, since $B$ is $TP$. Define the $(k+2)\times (k+2)$ matrix \begin{center}$M:=[y_I|B_{I \times [k+1]}], \hbox{ where } I=\{i_1,\ldots, i_{k+2}\}.$\end{center}
	Then $\det M=0$, since the first column of $M$ is an alternating sum of the rest. Expanding along the first column,
	\[
	0=\sum_{t=1}^{k+2} (-1)^{t-1}y_{i_t} \det B_{{I\setminus \{i_t\}}\times [k+1]},
	\]
	
	\noindent a contradiction, since all terms $(-1)^{t-1}y_{i_t}$ have the same sign, at least two $y_{i_t}$ are nonzero, and all minors of $B$ are positive. Thus $S^+(Ax)\leq S^-(x)$.
	
	It remains to prove the remainder of  the assertion $(1)$. We continue to employ the notation in the preceding discussion, now using $k+1$ in place of $k+2$. We claim that, if $S^+(Ax)=S^-(x)=k$ with $Ax\neq 0$, and if moreover $(-1)^{t-1}\epsilon y_{i_t}\geq 0$ for $t \in [{k+1}]$, then $\epsilon=1$.
	
	To show this we use the fact that the submatrix $B_{I\times [k+1]}$ is $TP$, where $I=\{i_1,\ldots,i_{k+1}\}$. Also, $B_{I\times [k+1]}\bd{k+1}=y_I$, since $B\bd{k+1}=Ax$. By Cramer's rule, the first coordinate of $\bd{k+1}$ is
	\[
	1=\frac{\det [y_I|B_{I\times [k+1]\setminus \{1\}}]}{\det B_{I\times [k+1]}}.
	\]
	Multiply both sides by $\epsilon \det B_{I\times [k+1]}$ and expand the numerator along the first column. This yields:
	
	\[
	\epsilon \det B_{I\times [k+1]}=\sum_{t=1}^{k+1} (-1)^{t-1}\epsilon y_{i_t} \det B_{I\setminus \{i_t\}\times [k+1]\setminus \{1\}}.
	\]
	Since each summand on the right side is non-negative with at least one positive  and $B$ is $TP$, it implies that $\epsilon=1$.
	
	\noindent$\textbf{(2).}$ We prove this by contradiction. Let $x_iy_i\leq 0$ for all $i \in [r]$. Consider the index set \begin{center} $I=\{i_1,\ldots, i_{k+1}\}, \hbox{ where } i_t \in [s_{t-1}+1,s_t]$.
	\end{center}
	Then the matrix $B_{I\times [k+1]}$ is totally positive and $B_{I\times [k+1]}\bd{k+1}=y_I$.  Thus $B_{I\times [k+1]}$ reverses the signs of $\bd{k+1}$, a contradiction by Theorem \ref{tp-sign-rev_k}. 
\end{proof}

\begin{example}\label{examplelast}
	We now explain how to construct a multi-parameter family of real $n \times n$ matrices for every integer $n \geq 3$, each of which is $TP_{n-1}$ but has negative determinant. This construction involves non-integer powers of a certain P\'olya frequency function, studied by Karlin in 1964 \cite{Karlin64}:
	\begin{equation}\label{EBmatrix}
	\omg(x) := \begin{cases}
	xe^{-x}, & \text{if } x>0;\\
	0, \qquad & \text{otherwise}.
	\end{cases}
	\end{equation}
	Karlin showed that if $\alpha \in \mathbb{Z}^{\geq 0}\cup [k-2, \infty)$ then $\omg(x)^\alpha$ is $TN_k$, i.e., given  real $x_1<\cdots <x_k$ and $y_1<\cdots<y_k$, the matrix $B:=\left(\omg (x_i-y_j)^\alpha\right)^k_{i,j=1}$ is $TN$.
	
	Recently in \cite{Khare20}, Khare showed that if $y_1<\cdots < y_k< x_1 <\cdots <x_k$, then $\left(\omg (x_i-y_j)^\alpha\right)^k_{i,j=1}$ is $TP_k$ if $\alpha >k-2$ and not $TN_k$ if $\alpha \in (0,k-2)\setminus \mathbb{Z}$. Thus, consider $n\geq 3$ and $\alpha \in ({n-3},{n-2})$, and choose  real scalars $y_1<\cdots < y_n< x_1 <\cdots <x_n$.  Then the matrix $A:=\left(\omg (x_i-y_j)^\alpha\right)^n_{i,j=1}$ is $TP_{n-1}$ but not $TN_n$, whence $\det {A}<0$.
\end{example}
An analogue of Theorem \ref{thrmlast} holds for $TN_{r-1}$ matrices.
\begin{theorem}\label{Ttnlast}
	Let $x \in \mathbb{R}^r\setminus \altr{r}$ with all $x_i \neq 0$ and $A_r\in \mathbb{R}^{r \times r}$ be a $TN_{r-1}$ matrix. Then:

	\begin{enumerate}
		\item[(1)] $A_r$ satisfies the variation diminishing property with respect to $x$. In other words, $S^-(A_rx) \leq S^-(x)$. If
		moreover equality holds and $Ax \neq 0$, then the first (last)
		nonzero component of $Ax$ has the same sign as the first (last) component of $x$.  
		\item[(2)] $A_r$ satisfies the non-strict sign non-reversal property with respect to $x$. In other words, there exists a coordinate $i \in [r]$ such that   $x_i(A_rx)_i\geq0$. 
	\end{enumerate}
\end{theorem}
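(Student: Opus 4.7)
The plan is to mirror the proof of Theorem \ref{thrmlast}, replacing its TP-based ingredients by their TN counterparts, namely Theorems \ref{TN-vandim} and \ref{tnsnr}.

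First I decompose $x$ into maximal contiguous blocks of coordinates of common sign; since all $x_i \neq 0$ and $x \notin \altr{r}$, there are $k+1$ such blocks with $0 \leq k \leq r-2$. By replacing $x$ with $-x$ if necessary---an operation that preserves $S^-$, $S^+$, each quantity $x_i(A_rx)_i$, and the first/last component sign assertion in (1)---I may normalize so that the $i$th block has sign $(-1)^{i-1}$. Writing $a^1,\ldots,a^r$ for the columns of $A_r$ and setting
\[
b^i := \sum_{j=s_{i-1}+1}^{s_i} |x_j|\, a^j, \qquad B := [b^1,\ldots,b^{k+1}] \in \mathbb{R}^{r \times (k+1)},
\]
the multilinearity-in-columns expansion used in the proof of Theorem \ref{thrmlast} writes each minor of $B$ as a positive-coefficient sum of minors of $A_r$ of size at most $k+1 \leq r-1$. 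These minors of $A_r$ are non-negative by the $TN_{r-1}$ hypothesis, so $B$ is totally non-negative; moreover $A_rx = B\bd{k+1}$.

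For part~(1), applying Theorem \ref{TN-vandim} to the TN matrix $B$ at $\bd{k+1} \in \altr{k+1}$ yields $S^-(A_rx) = S^-(B\bd{k+1}) \leq S^-(\bd{k+1}) = k = S^-(x)$. In the equality case with $A_rx \neq 0$, that same theorem asserts that the first (respectively last) nonzero coordinate of $B\bd{k+1}$ shares its sign with that of $\bd{k+1}$; by the normalization above, the first and last coordinates of $\bd{k+1}$ have the signs of $x_1$ and $x_r$ respectively, which yields the desired sign conclusion.

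For part~(2), suppose for contradiction that $x_i(A_rx)_i < 0$ for every $i \in [r]$. Pick one index $i_t$ from each block, $i_t \in [s_{t-1}+1,s_t]$, and put $I := \{i_1,\ldots,i_{k+1}\}$. The square $(k+1)\times(k+1)$ submatrix $B_{I \times [k+1]}$ inherits total non-negativity from $B$, and satisfies $B_{I \times [k+1]}\bd{k+1} = (A_rx)_I$. Since $\sgn(x_{i_t}) = (-1)^{t-1}$ and $x_{i_t}(A_rx)_{i_t} < 0$, one computes $\bd{k+1}_t \cdot (B_{I \times [k+1]}\bd{k+1})_t = (-1)^{t-1}(A_rx)_{i_t} < 0$ for every $t \in [k+1]$, contradicting the non-strict sign non-reversal property supplied by Theorem \ref{tnsnr} for the TN matrix $B_{I \times [k+1]}$ at $\bd{k+1} \in \altr{k+1}$.

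The main conceptual obstacle separating this argument from the TP proof of Theorem \ref{thrmlast} is that the minors of $B$ are only non-negative rather than strictly positive, so the ad hoc determinantal expansions used there cannot produce a contradiction in either part. The cleaner route is to invoke the TN versions of variation diminution and sign non-reversal (Theorems \ref{TN-vandim} and \ref{tnsnr}), which are insensitive to such degeneracies and handle both assertions uniformly.
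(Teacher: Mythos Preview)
Your proof is correct, but it takes a genuinely different route from the paper's. The paper does not repeat the block-column construction of $B$; instead it invokes Whitney's density theorem (Theorem~\ref{Twhitney}) to approximate $A_r$ by a sequence of $TP_{r-1}$ matrices $A_r^{(l)}$, applies the already-established $TP_{r-1}$ result (Theorem~\ref{thrmlast}) to each $A_r^{(l)}$, and then passes to the limit using the semicontinuity properties of $S^-$ and $S^+$ recorded in Lemma~\ref{limsc}. For part~(2) a pigeonhole/subsequence argument extracts a single index $i_0$ on which the strict inequality survives as a non-strict one in the limit.

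Your approach is more self-contained: you rerun the construction from Theorem~\ref{thrmlast} directly on $A_r$, observe that $B$ is merely $TN$ rather than $TP$, and then appeal to the $TN$ characterizations (Theorems~\ref{TN-vandim} and~\ref{tnsnr}) in place of the determinantal expansions. This avoids Whitney's theorem and the limiting analysis entirely, and makes the parallel with Theorem~\ref{thrmlast} explicit. The paper's approach, by contrast, treats Theorem~\ref{thrmlast} as a black box and transfers it wholesale to the $TN$ setting via density---cleaner bookkeeping, but at the cost of importing two auxiliary results (Theorem~\ref{Twhitney} and Lemma~\ref{limsc}) that your argument does not need.
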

The proof requires  Whitney's density result for totally positive matrices and a  lemma on sign changes of limits of vectors.
\begin{theorem}[(Whitney,~\cite{Whitney})]\label{Twhitney}
	Given integers $m,n \geq k \geq 1$, the set of $m \times n$ $TP_k$
	matrices is dense in the set of $m \times n$ $TN_k$ matrices.
\end{theorem}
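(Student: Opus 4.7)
Plan. Given an $m \times n$ matrix $A$ that is $TN_k$, I will construct a sequence of $TP_k$ matrices $A(\epsilon)$ converging entrywise to $A$ as $\epsilon \to 0^+$. The core tool is the Cauchy--Binet formula applied to a Gaussian-kernel sandwich. For each $\epsilon > 0$, define Gaussian Toeplitz matrices $U_\epsilon \in \mathbb{R}^{m \times m}$ and $V_\epsilon \in \mathbb{R}^{n \times n}$ by $(U_\epsilon)_{ij} := e^{-(i-j)^2/\epsilon}$ and similarly for $V_\epsilon$. By Schoenberg's classical result that the Gaussian is a P\'olya frequency function, each of $U_\epsilon, V_\epsilon$ is $TP$; moreover $(U_\epsilon)_{ij} \to \delta_{ij}$ as $\epsilon \to 0^+$, so $U_\epsilon \to I_m$ and $V_\epsilon \to I_n$ entrywise.

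Set $B_\epsilon := U_\epsilon A V_\epsilon$; then $B_\epsilon \to A$. For any $r \in [k]$ and $r$-element index sets $I \subseteq [m]$, $J \subseteq [n]$, the Cauchy--Binet formula gives
\[
\det (B_\epsilon)_{I \times J} = \sum_{|K| = |L| = r} \det (U_\epsilon)_{I \times K} \cdot \det A_{K \times L} \cdot \det (V_\epsilon)_{L \times J}.
\]
Minors of $U_\epsilon$ and $V_\epsilon$ are strictly positive (as they are $TP$), while those of $A$ of size $\leq k$ are non-negative (as $A$ is $TN_k$). Hence $\det(B_\epsilon)_{I \times J} \geq 0$, with strict inequality whenever $A$ has at least one positive $r \times r$ minor. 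In the generic case where $A$ has at least one positive minor of each size $r \in [k]$, $B_\epsilon$ is itself $TP_k$ for every $\epsilon > 0$ and we are done.

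The main obstacle is the degenerate case, when $A$ has \emph{all} $r \times r$ minors vanishing for some $r \in [k]$ (for instance if $A$ has a zero row or rank less than $k$); then every Cauchy--Binet summand at level $r$ vanishes and $B_\epsilon$ inherits this defect. The natural remedy is an additive perturbation $A(\epsilon) := B_\epsilon + \eta_\epsilon G$ for a fixed $TP$ matrix $G$ and small $\eta_\epsilon > 0$; but the determinant expansion of $\det(B_\epsilon + \eta_\epsilon G)_{I \times J}$ produces mixed-column terms drawing columns from both $B_\epsilon$ and $G$, and these cross-terms are not automatically non-negative. A small $2 \times 2$ check with $A = \bigl(\begin{smallmatrix}0 & 1 \\ 0 & 0\end{smallmatrix}\bigr)$ and $G$ any strictly $TP$ matrix already shows that $B_\epsilon + \eta_\epsilon G$ need not remain $TN_k$, so the naive additive perturbation is insufficient.

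To bypass this sign issue, I would invoke the classical Loewner--Cryer bidiagonal factorization of $TN$ matrices: every $TN$ matrix admits a factorization as a product of elementary non-negative bidiagonal matrices and a non-negative diagonal matrix. Perturbing each zero parameter in this factorization to a small positive value yields a product of strictly positive bidiagonal factors, and by the Lindstr\"om--Gessel--Viennot interpretation of minors via non-intersecting lattice paths this product has all minors strictly positive, hence is $TP$. To transfer this to the $TN_k$ setting, I approximate $A$ by $TN$ matrices which agree with $A$ in all minors of size $\leq k$ (truncating or padding as necessary), apply the $TN$ case to obtain nearby $TP$ matrices, and pass to the limit; since $TP \subseteq TP_k$, this delivers the desired sequence of $TP_k$ matrices converging to $A$ and completes the density argument.
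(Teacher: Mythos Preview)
The paper does not give its own proof of this statement; it is cited as Whitney's 1952 result and used as a black box in the proof of Theorem~\ref{Ttnlast}. So there is no ``paper's proof'' to compare against, and I evaluate your argument on its own merits.

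Your Gaussian-sandwich step is correct and is essentially the classical approach: if $A$ is $TN_k$ and possesses at least one positive $r\times r$ minor for every $r\in[k]$, then Cauchy--Binet forces every $r\times r$ minor of $U_\epsilon A V_\epsilon$ to be strictly positive, and you are done. You also correctly isolate the obstruction: if all $r\times r$ minors of $A$ vanish for some $r\le k$, the sandwich preserves this (rank does not increase), so $B_\epsilon$ is not $TP_k$.

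The gap is in your treatment of this degenerate case. Two issues. First, the sentence ``approximate $A$ by $TN$ matrices which agree with $A$ in all minors of size $\le k$ (truncating or padding as necessary)'' is not a well-defined construction: a $TN_k$ matrix with a strictly negative $(k{+}1)\times(k{+}1)$ minor has no $TN$ matrices in a neighbourhood, so no such approximation exists in general. What you are missing is the clean observation that the degenerate case is automatically the $TN$ case: if every $r\times r$ minor of $A$ vanishes for some $r\le k$, then $\mathrm{rank}(A)<r\le k$, so every minor of size exceeding $k$ also vanishes, and together with the $TN_k$ hypothesis this makes $A$ fully $TN$. Thus the reduction is ``degenerate $TN_k$ $\Rightarrow$ $TN$'', not an approximation step.

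Second, even granting this reduction, your appeal to the Loewner--Cryer bidiagonal factorization is incomplete. That factorization is classically stated for square nonsingular $TN$ matrices; the extension to rectangular and singular $TN$ matrices (Gasca--Pe\~na, Fallat--Johnson) is substantially more delicate, and in several standard treatments the factorization theorem itself is \emph{derived using} Whitney's density theorem, which would make your argument circular. If you want to run the factorization route, you must either cite a proof of the rectangular/singular bidiagonal factorization that is independent of Whitney's theorem, or give a direct perturbation argument for the low-rank $TN$ case (for instance, by augmenting $A$ to a full-rank $TN$ matrix of the same dimensions and then applying your Gaussian sandwich).
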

\begin{lemma}\cite{pinkus}\label{limsc}
	Given $x=(x_1,\ldots,x_n)^T\in \mathbb{R}^n\setminus \{0\}$, define the vector
	\[
	\overline{x}:=(x_1,-x_2,x_3,\ldots, (-1)^{n-1}x_n)\in \mathbb{R}^n.
	\]
	Then $S^+(x)+S^-(\overline{x})=n-1.$
	Moreover, if $\lim\limits_{p \to \infty} x_p =x$, then
	
	\[\liminf\limits_{p \to \infty} S^-(x_p)\geq S^-(x), \qquad \limsup\limits_{p \to \infty} S^+(x_p)\leq S^+(x).\]
\end{lemma}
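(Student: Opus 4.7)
The plan is to prove the two halves of the lemma separately, deriving the limsup inequality from the identity rather than proving it from scratch. I begin with the identity $S^+(x) + S^-(\overline{x}) = n-1$ for $x \neq 0$. The strategy is to first dispatch vectors with no zero entries and then reduce the general case via variational descriptions of $S^+$ and $S^-$.

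For a vector $x$ with all nonzero coordinates, the key observation is $\overline{x}_i \overline{x}_{i+1} = -x_i x_{i+1}$ for each $i \in [n-1]$: a consecutive pair contributes a sign change to $x$ if and only if it contributes none to $\overline{x}$. Partitioning the $n-1$ consecutive pairs accordingly yields $S^-(x) + S^-(\overline{x}) = n-1$, and since $x$ has no zeros, $S^+(x) = S^-(x)$, which gives the identity in this case.

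For general $x \neq 0$, I would use the variational descriptions $S^+(x) = \max_{x'} S^-(x')$ and $S^-(x) = \min_{x'} S^-(x')$, where $x'$ ranges over all vectors obtained from $x$ by replacing each zero entry by an arbitrary $\pm 1$. The minimum characterization of $S^-$ rests on an insertion-monotonicity fact: inserting a nonzero entry into any sign sequence never decreases the number of sign changes, proved by a short case analysis on whether the inserted sign matches its neighbors. Since the zero positions of $\overline{x}$ coincide with those of $x$ and the map $s_i \mapsto (-1)^{i-1} s_i$ is a bijection between fillings of $x$ and fillings of $\overline{x}$, the no-zero case applied to each joint filling $x'$ gives $S^-(x') + S^-(\overline{x'}) = n-1$. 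Maximizing over fillings then yields $S^+(x) = \max_{x'} S^-(x') = n - 1 - \min_{x'} S^-(\overline{x'}) = n - 1 - S^-(\overline{x})$.

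For Part 2, I would prove $\liminf_{p\to\infty} S^-(x_p) \geq S^-(x)$ directly: each nonzero coordinate of $x$ is matched by a coordinate of $x_p$ of the same sign for all sufficiently large $p$, so restricting to the indices where $x_i \neq 0$ already exhibits a subsequence of nonzero entries of $x_p$ with exactly $S^-(x)$ sign changes; and inserting the remaining nonzero entries of $x_p$ (at positions where $x$ vanishes) can only raise the count, by the same insertion-monotonicity fact. The complementary inequality $\limsup_{p\to\infty} S^+(x_p) \leq S^+(x)$ then follows formally: assuming $x \neq 0$ (the case $x = 0$ is trivial since $S^+(x_p) \leq n = S^+(0)$), apply the identity of Part 1 to both $x$ and to each $x_p$ (valid eventually, as $x_p \neq 0$ for large $p$), use that $\overline{x_p} \to \overline{x}$ coordinatewise, and invoke the lower semicontinuity of $S^-$ already established, applied to the sequence $\overline{x_p}$. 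The main delicacy I anticipate is cleanly establishing the insertion-monotonicity lemma, which is the common engine behind both the variational description of $S^-$ in Part 1 and the lower bound for $S^-(x_p)$ in Part 2.
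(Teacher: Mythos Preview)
Your argument is correct. The paper does not supply its own proof of this lemma; it is quoted from Pinkus's monograph \cite{pinkus} and used as a black box in the proof of Theorem~\ref{Ttnlast}. So there is no paper-level proof to compare against, and your write-up stands on its own merits: the reduction of the identity to the all-nonzero case via the variational descriptions $S^+(x)=\max_{x'}S^-(x')$ and $S^-(x)=\min_{x'}S^-(x')$, together with the insertion-monotonicity fact, is clean and standard, and your derivation of the $\limsup$ inequality from the $\liminf$ inequality via the identity (rather than by a separate argument) is an efficient shortcut.
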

	 We can now prove the above properties of $TN_{r-1}$ matrices.
	 \begin{proof}[of Theorem \ref{Ttnlast}]
	 	\textbf{(1).} Let $x \in \mathbb{R}^r\setminus \altr{r}$ with all $x_i \neq 0$. Since $A_r$ is totally non-negative of order $r-1$, by Whitney's density Theorem \ref{Twhitney}, there exists a sequence $A_r^{(l)}$ of totally positive matrices of order $r-1$ such that 
	 	\[\lim\limits_{l \to \infty} A_r^{(l)} =A_r.\]
	 	Now use Theorem \ref{thrmlast} and Lemma \ref{limsc} to compute:
	 	\[S^-(A_rx)\leq \liminf\limits_{l \to \infty} S^-(A_r^{(l)} x)\leq \liminf\limits_{l \to \infty} S^+(A_r^{(l)}x)\leq \liminf\limits_{l \to \infty} S^-(x)=S^-(x).\]
	 	Next, if equality occurs and $A_rx\neq 0$, then for all $l$ large enough, we have
	 	\[S^-(A_rx)\leq S^-(A_r^{(l)}x)\leq S^+(A_r^{(l)}x)\leq S^-(x),\]
	 	by Theorem \ref{thrmlast} and Lemma \ref{limsc}. Thus
		$S^-(A_r^{(l)}x)=S^+(A_r^{(l)}x)=S^-(x)$ for $l$
		sufficiently large. This implies (for large $l$) the sign
		changes in $A_r^{(l)}x$  have no dependence on the zero
		entries.  Thus the nonzero sign patterns of $A_r^{(l)} x$
		agree with those of $A_rx$. Also, by Theorem
		\ref{thrmlast}, both $x$ and $A_r^{(l)}x$ admit
		partitions of the form \eqref{partnlast} with alternating
		signs, with precisely $S^-(x)$-many sign changes.  Hence
		the same holds for the sign patterns of $A_rx$ and $x$.
	 	
	\noindent \textbf{(2).} By
	 Theorem~\ref{Twhitney}, there exists a sequence $A_r^{(l)} \to A_r$ of $TP_{r-1}$
	 matrices.  Now
	 $A_r^{(l)}$ is $TP_{r-1}$, so by Theorem~\ref{thrmlast}(2) there exists $i_l
	 \in [r]$ such that $x_{i_l} (A_r^{(l)} x)_{i_l} > 0$. Hence there exists $i_0 \in [r]$ and an
	 increasing subsequence of positive integers $l_p$ such that $i_{l_p}=i_0$ for all $p \geq 1$. Now $(2)$ follows:
	 \[
	 x_{i_0} (A_rx)_{i_0} = \lim_{p \to \infty} x_{i_{l_p}} (A_r^{(l_p)}
	 x)_{i_{l_p}} \geq 0.
	 \]
	 	\end{proof}
By Example \ref{examplelast}, we have a $TN_{r-1}$ matrix which is not $TN_r$. This gives us the following proposition.
\begin{prop}
	Test-vectors from any open orthant apart from the open bi-orthant $\altr{r}$, can not be used to characterize total non-negativity via either variation diminution or sign non-reversal.
\end{prop}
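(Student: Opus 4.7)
The plan is to deduce the proposition immediately from Theorem~\ref{Ttnlast} combined with the existence result in Example~\ref{examplelast}. Fix $r \geq 3$ and let $O \subset \mathbb{R}^r$ be any open orthant distinct from the two comprising $\altr{r}$. By construction, every $x \in O$ has all coordinates nonzero and contains at least two successive coordinates of the same sign; equivalently, $x \in \mathbb{R}^r \setminus \altr{r}$ with all $x_i \neq 0$, which is precisely the hypothesis of Theorem~\ref{Ttnlast}.

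Next, I would exhibit a witness matrix that rules out any such orthantwise characterization. By Example~\ref{examplelast}, for each $r \geq 3$ one can choose $\alpha \in (r-3,r-2)$ and real scalars $y_1 < \cdots < y_r < x_1 < \cdots < x_r$ such that $A := (\omg(x_i - y_j)^\alpha)_{i,j=1}^r$ is $TP_{r-1}$ but has $\det A < 0$. In particular $A$ is $TN_{r-1}$, yet $A$ is not $TN_r$.

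Finally, I would apply Theorem~\ref{Ttnlast} to this $A$ for every $x \in O$. Since $A$ is $TN_{r-1}$ and $x \in \mathbb{R}^r\setminus\altr{r}$ has nonzero coordinates, the theorem guarantees that $A$ satisfies both (a) the variation diminishing property $S^-(Ax) \leq S^-(x)$ together with the matching first/last nonzero sign condition in case of equality, and (b) the non-strict sign non-reversal property with respect to $x$. Hence $A$ is indistinguishable from a $TN_r$ matrix using only test vectors in $O$, though $A \notin TN_r$. This rules out any characterization of total non-negativity via either property using test vectors drawn from $O$ alone, establishing the proposition.

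The whole argument is essentially a one-line combination of two previously established facts, so I do not anticipate a serious technical obstacle. The only delicate point is that Theorem~\ref{Ttnlast} must apply to \emph{every} vector in the chosen orthant (not merely to a generic one), which is already built into its hypothesis $x \in \mathbb{R}^r \setminus \altr{r}$ with all $x_i \neq 0$; the entirety of $O$ lies in this set. Likewise, the properties $A \in TN_{r-1}$ and $A \notin TN_r$ are both certified by the cited results of Karlin~\cite{Karlin64} and Khare~\cite{Khare20} referenced in Example~\ref{examplelast}.
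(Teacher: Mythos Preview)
Your proposal is correct and follows exactly the paper's own justification: the paper simply states ``By Example~\ref{examplelast}, we have a $TN_{r-1}$ matrix which is not $TN_r$. This gives us the following proposition,'' and you have spelled out precisely this argument via Theorem~\ref{Ttnlast}. The only trivial omission is the case $r=2$, which Example~\ref{examplelast} does not cover; there any entrywise positive $2\times 2$ matrix with negative determinant (e.g.\ $\begin{pmatrix}1&2\\2&1\end{pmatrix}$) serves as the witness, and the paper leaves this implicit as well.
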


We conclude with a similar observation about the LCP: Theorem \ref{tp-lcp} shows that for $TP_k$ matrices $A$, the solution sets to certain LCPs cannot simultaneously contain two vectors with alternately zero and positive coordinates (and disjoint supports). Our final result shows that if $A \in \mathbb{R}^{r \times r}$ is merely $TP_{r-1}$, the same holds when `alternating' is replaced by `not always alternating'. Thus, the `alternation' is also distinguished for the LCP-characterization of total positivity.

\begin{prop}
	If $A \in \mathbb{R}^{r \times r}$ with $r\geq 2$ is a $TP_{r-1}$ matrix, then $\sol{A_r}$ with $q<0$ does not simultaneously contain two vectors which have disjoint supports, and at least one of which has two consecutive positive coordinates.
\end{prop}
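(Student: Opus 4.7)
The plan is to argue by contradiction using the sign non-reversal property for $TP_{r-1}$ matrices recorded in Theorem~\ref{thrmlast}(2). Assume, toward a contradiction, that $z^1, z^2 \in \sol{A}$ have disjoint supports and that, without loss of generality, $z^1$ has positive entries at two consecutive indices $i, i+1 \in [r]$. Put $x := z^1 - z^2$, $y^k := A z^k + q$ for $k = 1, 2$, and $y := Ax = y^1 - y^2$.

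First I would establish that $A$ reverses the sign of $x$ non-strictly, namely $x_j y_j \leq 0$ for every $j \in [r]$. This is a short case analysis: if $z^1_j > 0$, then complementarity forces $y^1_j = 0$, disjointness gives $z^2_j = 0$, hence $x_j = z^1_j > 0$ and $y_j = -y^2_j \leq 0$; the case $z^2_j > 0$ is symmetric; and if $z^1_j = z^2_j = 0$, then $x_j = 0$ (this case occurs because the disjoint supports need not cover $[r]$).

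The main obstacle is that $x$ can have zero coordinates, so Theorem~\ref{thrmlast}(2) does not apply to $A$ on $x$ directly. To sidestep this I would pass to the support $J := \mathrm{supp}(z^1) \sqcup \mathrm{supp}(z^2) = \mathrm{supp}(x)$, enumerated as $j_1 < \cdots < j_s$ with $s := |J|$, and work with the principal submatrix $B := A_{J \times J}$ acting on $x_J \in \mathbb{R}^{s}$. The key observations are: (a) $x_J$ has all nonzero entries by construction; (b) since $i, i+1$ are consecutive integers both lying in $J$, they occupy consecutive positions in the enumeration of $J$, and the corresponding entries of $x_J$ equal the positive scalars $z^1_i, z^1_{i+1}$, so $x_J$ has two successive coordinates of common sign; and (c) $B$ is $TP_{s-1}$, since if $s = r$ then $B = A$ is $TP_{r-1}$, while if $s \leq r - 1$ then every minor of $B$ has size at most $s \leq r-1$ and is therefore a positive minor of $A$.

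Applying Theorem~\ref{thrmlast}(2) to the $s \times s$ matrix $B$ and the vector $x_J$ now produces some $k \in [s]$ with $x_{J, k} (B x_J)_k > 0$. Unwinding the identifications $(Bx_J)_k = (Ax)_{j_k} = y_{j_k}$ and $x_{J,k} = x_{j_k}$, this reads $x_{j_k} y_{j_k} > 0$, contradicting the non-strict sign reversal from the first step. The delicate point throughout is the support-restriction trick, which works precisely because principal submatrices of a $TP_{r-1}$ matrix remain totally positive to the order required by Theorem~\ref{thrmlast}.
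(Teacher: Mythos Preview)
Your proof is correct and follows exactly the approach the paper sketches: contradict Theorem~\ref{thrmlast}(2) by forming $x=z^1-z^2$ and checking $x_j(Ax)_j\le 0$ for every $j$, in the manner of the proof of Theorem~\ref{tp-lcp}. Your restriction to the principal submatrix $A_{J\times J}$ on the joint support $J$ is the right detail to supply so that Theorem~\ref{thrmlast}(2) applies even when the disjoint supports of $z^1,z^2$ do not cover all of~$[r]$.
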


The proof is analogous to Theorem \ref{tp-lcp} using Theorem \ref{thrmlast} $(2)$.
Hence by Example \ref{examplelast}, totally positive matrices can not be identified by the solution sets $\sol{A}$ of LCP, which does not simultaneously contain two vectors with disjoint supports, and at least one of which has two consecutive positive coordinates.

\acknowledgments{
I thank the referee for carefully going through the paper and
for their suggestions. I also thank J\"{u}rgen Garloff and Apoorva Khare for a detailed reading of an earlier draft and for providing valuable feedback.
}

\affiliationone{
   Projesh Nath Choudhury\\
   Department of Mathematics\\
   Indian Institute of science\\
   Bangalore 560012\\India
   \email{projeshc@iisc.ac.in\\ projeshnc@alumni.iitm.ac.in
   }}

\end{document}